\newtheorem{theorem}{Theorem}[section]
\newtheorem{teorema}[theorem]{Theorem}
\newtheorem{proposicion}[theorem]{Proposition}
\newtheorem{lema}[theorem]{Lemma}
\newtheorem{corolario}[theorem]{Corollary}
\newtheorem{lemma}[theorem]{Lemma}
\newtheorem{corollary}[theorem]{Corollary}
\theoremstyle{definition}
\newtheorem*{definition}{Definition} 
\newtheorem{example}[theorem]{Example}
\newtheorem{remark}[theorem]{Remark}
\newtheorem*{notation}{Notation}
\def\proof{\noindent \textit{Proof: }}
\def\d{\mathrm{d}}
\def\CC{\mathbb{C}}
\def\RR{\mathbb{R}}
\def\g{\mathfrak{g}}
\def\Man{{\bf Man}}
\def\Ad{{\rm Ad}\,}
\def\qed{\hfill $\square$}
\def\D{\mathrm{d}}
\begin{document}

\title{Natural operations on differential forms}

\author{J. Navarro \thanks{Corresponding author. Email address: {\it navarrogarmendia@unex.es} \newline 
Department of Mathematics, Universidad de Extremadura, Avda. Elvas s/n, Badajoz, Spain. \newline The first author has been partially supported by Junta de Extremadura and FEDER funds. }  
 \and  J. B. Sancho }

\maketitle

\begin{abstract}
We prove that the only natural operations between differential forms are those 
obtained using linear combinations, the exterior product  and the exterior differential. Our result generalises work by Palais \cite{Palais} and Freed-Hopkins \cite{FreedHopkins}. 

As an application, we also deduce a theorem, originally due to Kol\'{a}\v{r} (\cite{Kolar}), that determines those natural differential forms that can be asso\-ciated to a connection on a principal bundle.
\end{abstract}

\bigskip

{\it MSC 2010:} 58Axx; 58A32, 53C05.

\tableofcontents

\section*{Introduction}

Let $\, \omega_1\dots,\omega_k\,$ be differential forms on a smooth manifold, of positive degree. In this paper we determine those differential forms that can be associated in a {\it natural} way to the given forms $\, \omega_1, \ldots , \omega_k$. Loosely speaking, our result says that the only natural operations between differential forms are those obtained using linear combinations, the exterior product  and the exterior differential.

To be more precise, let us fix positive integers $\, p_1,\dots,p_k\,$ and an integer $\, q\geq 0$. Let us suppose that, for each manifold $\,X\,$ and each collection $\,\omega_1\dots,\omega_k\,$ of differential forms on $\,X\,$ of degree $\,p_1,\dots,p_k$, we have defined a $q$-form $\,P(\omega_1,\dots,\omega_k)\,$ on $\,X$. Let us also assume that the assignment $\,(\omega_1\dots,\omega_k)\longmapsto P(\omega_1,\dots,\omega_k)\,$ is compatible with inverse images: for any smooth map $\,\tau\colon\bar X\to X\,$ it holds 
$$\tau^*\left[ P(\omega_1,\dots,\omega_k)\right]\,=\, P(\tau^*\omega_1,\dots,\tau^*\omega_k)\ .$$

Then, our main result (Theorem \ref{3.3}) proves that there exists a unique polynomial $\,\mathbf{P}(x_1,y_1,\dots,x_k,y_k)$, homogeneous of degree $q$ in anti-commutative variables of degree $\,\text{deg}\, x_i=p_i$, $\text{deg}\, y_i=p_i+1$, such that:
$$P(\omega_1,\dots,\omega_k)\,=\, \mathbf{P}(\omega_1,\D\omega_1,\dots,\omega_k,\D\omega) \ , $$
for any smooth manifold $\,X\,$ and any collection $\,\omega_1\dots,\omega_k\,$ of differential forms on $\,X\,$ of degrees $\,p_1,\dots,p_k$. In this formula, the product of  variables on the polynomial $\,\mathbf{P}\,$ has been replaced by the exterior product of forms. 

The case $\,p_1=\dots=p_k=1\,$ has been recently obtained by Freed-Hopkins (\cite{FreedHopkins}, Thm. 8.1) using a different language and quite different methods to those applied here.

When the given collection reduces to a single form $\,\omega\,$ of degree $\,p$, and we take $\,q=p+1$, it follows that the assignment $\,P\,$ is a constant multiple of the exterior differential: $\,P(\omega)=\lambda \, \D\omega$. This is a classical statement due to Palais (\cite{Palais}, Thm. 10.5), who assumed $\, P\,$ to be linear, and to Kol\'{a}\v{r}-Michor-Slov\'{a}k (\cite{KMS}, Prop. 25.4) in the general case.

We have included two versions of our main result: a first one, Theorem \ref{2.3}, whose proof relies on general methods developed in \cite{KMS}, and a second one, Theorem \ref{3.3}, which corresponds to the statement announced above.
The exposition of this paper will be self-contained, except for the Peetre-Slov\'{a}k theorem on differential operators and some basic facts about the invariant theory of the general linear group. 

As an application of our results, we prove in the Appendix a version of a beautiful theorem originally due to Kol\'{a}\v{r} (\cite{Kolar}), that determines those differential forms that one can construct in a natural way from a connection on a principal bundle (see also \cite{FreedHopkins}, Thm. 7.20).

\section{Preliminaries}

\subsection{Invariant theory of the general linear group}

Let $\,V\,$ be an $\mathbb{R}$-vector space of finite dimension $\,n$, and let $\,Gl_n\,$ be the Lie group of its $\mathbb{R}$-linear automorphisms.

As $\,Gl_n\,$ is linearly semisimple, the following holds:

\begin{proposicion}\label{Semisimple}
Let $\,E\,$ and $\,F\,$ be linear representations of $\,Gl_n$, and let $\,E'\subset E\,$ be a sub-representation. 
Any equivariant linear map $\,E' \to F\,$ is the restriction of an equivariant linear map $\,E \to F$.
\end{proposicion}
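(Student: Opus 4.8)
The plan is to deduce the statement from the complete reducibility of the finite-dimensional representations of $\,Gl_n$, which is precisely what its linear semisimplicity provides. Since $\,E'\subset E\,$ is a sub-representation, semisimplicity yields a $\,Gl_n$-invariant linear complement, that is, a sub-representation $\,E''\subset E\,$ with $\,E=E'\oplus E''$. Let $\,\pi\colon E\to E'\,$ be the projection onto the first summand along $\,E''$. Because both summands are $\,Gl_n$-stable, $\,\pi\,$ is equivariant; it is of course linear, and $\,\pi|_{E'}=\mathrm{id}_{E'}$.

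Given now an equivariant linear map $\,f\colon E'\to F$, set $\,\tilde f:=f\circ\pi\colon E\to F$. As a composition of equivariant linear maps, $\,\tilde f\,$ is linear and equivariant, and for every $\,v\in E'\,$ we have $\,\tilde f(v)=f(\pi(v))=f(v)$; hence $\,\tilde f\,$ restricts to $\,f\,$ on $\,E'$, which is the desired extension. This part of the argument is purely formal once the splitting is available.

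The only substantive point is therefore the existence of the invariant complement $\,E''$, i.e. the linear semisimplicity of $\,Gl_n$ itself, and this is the step I would be most careful about. I would justify it by the unitarian trick: complexifying, $\,E'_{\CC}\subset E_{\CC}\,$ becomes a pair of rational representations of $\,GL_n(\CC)$; averaging an arbitrary Hermitian metric on $\,E_{\CC}\,$ over the maximal compact subgroup $\,U(n)\,$ produces a $\,U(n)$-invariant metric, whose orthogonal complement of $\,E'_{\CC}\,$ is $\,U(n)$-stable, hence $\,GL_n(\CC)$-stable by Zariski density of $\,U(n)\,$ in $\,GL_n(\CC)$; one then descends to the real form to obtain $\,E''$. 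The subtlety worth flagging is that this uses essentially that the representations in question are rational (polynomial in the matrix entries and in $\,\det^{-1}$), since $\,Gl_n\,$ is noncompact and does admit representations that are not completely reducible; this rationality holds for all the representations appearing later, being built from tensor, symmetric and exterior powers of $\,V\,$ and $\,V^*$.
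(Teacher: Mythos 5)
Your proof is correct and follows essentially the same route as the paper, which in fact offers no argument at all beyond the phrase ``as $Gl_n$ is linearly semisimple, the following holds'': the intended content is precisely the complete-reducibility--invariant-complement--projection argument you spell out. Your caveat about rationality is well taken (the proposition is false for arbitrary finite-dimensional representations of the noncompact group $Gl_n$), but it is harmless here since every representation used in the paper is built from tensor, symmetric and exterior powers of $V$, hence rational.
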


On the other hand, the Main Theorem of the invariant theory for the general linear group states that the only $\,Gl_n$-equivariant linear maps $\,\otimes^p V \longrightarrow \otimes^p V\,$ are the linear combinations of permutations of indices (e. g., \cite{KMS} Sect. 24). In this paper, we will only use the following consequence:

\begin{proposicion}\label{CorolarioMain}  The only $\,Gl_n$-equivariant linear maps $\, \otimes^p V \longrightarrow \Lambda^p V\,$ are the multiples of the skew-symmetrisation operator.
\end{proposicion}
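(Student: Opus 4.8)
The plan is to deduce Proposition \ref{CorolarioMain} from the Main Theorem of invariant theory quoted just before it, namely that every $Gl_n$-equivariant linear map $\otimes^p V \to \otimes^p V$ is a linear combination $\sum_{\sigma \in S_p} c_\sigma \, \sigma$ of the permutation-of-indices operators. The key observation is that $\Lambda^p V$ is naturally a sub-representation of $\otimes^p V$ (the image of the skew-symmetrisation operator, equivalently the subspace of antisymmetric tensors), so a $Gl_n$-equivariant linear map $f\colon \otimes^p V \to \Lambda^p V$ may be viewed as a $Gl_n$-equivariant linear map $\otimes^p V \to \otimes^p V$ whose image happens to lie in $\Lambda^p V$.

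First I would fix the inclusion $\iota\colon \Lambda^p V \hookrightarrow \otimes^p V$ and the skew-symmetrisation operator $\mathrm{Alt}\colon \otimes^p V \to \otimes^p V$, recalling that $\iota$ is $Gl_n$-equivariant, that $\mathrm{Alt}$ factors as $\iota \circ a$ for a surjection $a\colon \otimes^p V \to \Lambda^p V$, and that $a \circ \iota = \mathrm{id}_{\Lambda^p V}$ (with the normalisation $\mathrm{Alt} = \tfrac{1}{p!}\sum_\sigma \mathrm{sgn}(\sigma)\,\sigma$). Given an equivariant $f\colon \otimes^p V \to \Lambda^p V$, the composite $\iota \circ f \colon \otimes^p V \to \otimes^p V$ is equivariant, so by the Main Theorem $\iota \circ f = \sum_{\sigma} c_\sigma \, \sigma$ for scalars $c_\sigma$. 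Now I would precompose with $\iota$: on the antisymmetric tensor $\iota(v_1 \wedge \dots \wedge v_p)$ each permutation operator $\sigma$ acts by $\mathrm{sgn}(\sigma)$, so $(\iota \circ f)(\iota(\omega)) = \big(\sum_\sigma c_\sigma\, \mathrm{sgn}(\sigma)\big)\,\iota(\omega)$ for all $\omega \in \Lambda^p V$; writing $\lambda = \sum_\sigma c_\sigma \,\mathrm{sgn}(\sigma)$, this says $f \circ \iota = \lambda\, \mathrm{id}_{\Lambda^p V}$, i.e. $f = \lambda\, a = \lambda\, \mathrm{Alt}$ after identifying the target. To finish, precompose the identity $\iota\circ f = \sum_\sigma c_\sigma\,\sigma$ instead with an arbitrary simple tensor and apply $\mathrm{Alt}$ on the left (or equivalently note $\iota \circ f = \iota \circ f \circ \mathrm{Alt}$ since $f$ already lands in $\Lambda^p V$ forces... ) — cleanest is: since $f$ has image in $\Lambda^p V$ and $a\circ\iota = \mathrm{id}$, we get $f = a \circ \iota \circ f = a \circ \big(\sum_\sigma c_\sigma \sigma\big)$, and using $a\circ \sigma = \mathrm{sgn}(\sigma)\, a$ this equals $\big(\sum_\sigma c_\sigma\,\mathrm{sgn}(\sigma)\big)\, a = \lambda\, a$, which is precisely $\lambda$ times skew-symmetrisation.

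The only genuine subtlety — and the step I would present with a little care — is the identity $a \circ \sigma = \mathrm{sgn}(\sigma)\, a$, which is just the defining property of the antisymmetric quotient and holds because $a$ kills every tensor that is symmetric in two slots; everything else is formal manipulation with the equivariance already granted by the Main Theorem. I do not expect a real obstacle here: the content of the proposition is entirely carried by the quoted Main Theorem, and the reduction to the antisymmetric piece is short. One could alternatively phrase the whole argument by restriction: $\mathrm{Alt}$ is already an equivariant surjection $\otimes^p V \to \Lambda^p V$, so any equivariant $f$ differs from a multiple of it by an equivariant map vanishing on the image of $\mathrm{Alt}$; but controlling that difference still requires the Main Theorem, so the direct computation above is the most economical route.
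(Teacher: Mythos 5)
Your argument is correct and is precisely the deduction the paper leaves implicit: Proposition \ref{CorolarioMain} is stated there without proof, as a direct consequence of the quoted Main Theorem, and your computation $f = a\circ\iota\circ f = a\circ\big(\sum_\sigma c_\sigma\,\sigma\big) = \big(\sum_\sigma c_\sigma\,\mathrm{sgn}(\sigma)\big)\,a$ supplies exactly the routine details being omitted. One small caution: the intermediate step where you infer $f=\lambda\,a$ from $f\circ\iota=\lambda\,\mathrm{id}_{\Lambda^pV}$ alone is not valid (knowing $f$ on the antisymmetric tensors does not determine it on all of $\otimes^pV$), but the final version of your argument, which uses $a\circ\sigma=\mathrm{sgn}(\sigma)\,a$, correctly replaces it.
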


Combining the previous two propositions, it follows:

\begin{corolario}\label{0.3}  Let $\,E\subseteq\,\otimes^pV\,$ be a $\,Gl_n$-sub-representation. The only $\,Gl_n$-equi\-va\-riant linear maps $\,\otimes^pV\supseteq E \longrightarrow \Lambda^p V\,$ are the multiples of skew-symme\-trisation operator.
\end{corolario}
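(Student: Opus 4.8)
The plan is to combine Proposition \ref{Semisimple} with Proposition \ref{CorolarioMain} in the obvious way. Let $\,\phi\colon E\to\Lambda^pV\,$ be a $\,Gl_n$-equivariant linear map, where $\,E\subseteq\otimes^pV\,$ is a sub-representation. First I would invoke Proposition \ref{Semisimple}, with $\,E'=E$, $\,E=\otimes^pV\,$ (note the unfortunate clash of notation: the ambient representation in Proposition \ref{Semisimple} is played here by $\,\otimes^pV$), and $\,F=\Lambda^pV$. This produces a $\,Gl_n$-equivariant linear extension $\,\tilde\phi\colon\otimes^pV\to\Lambda^pV\,$ with $\,\tilde\phi|_E=\phi$.

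Next I would apply Proposition \ref{CorolarioMain} to $\,\tilde\phi$: since the only $\,Gl_n$-equivariant linear maps $\,\otimes^pV\to\Lambda^pV\,$ are the scalar multiples of the skew-symmetrisation operator $\,\mathrm{Alt}$, we get $\,\tilde\phi=\lambda\cdot\mathrm{Alt}\,$ for some $\,\lambda\in\RR$. Restricting back to $\,E\,$ gives $\,\phi=\tilde\phi|_E=\lambda\cdot(\mathrm{Alt}|_E)$, which is exactly the assertion that $\,\phi\,$ is a multiple of the skew-symmetrisation operator (restricted to $\,E$). That completes the argument.

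There is essentially no obstacle here; the corollary is a formal two-line consequence of the two preceding propositions, and the only thing worth flagging in the write-up is the notational collision just mentioned, so that the reader does not confuse the ``$E$'' of Proposition \ref{Semisimple} with the ``$E$'' of the corollary. If one wanted to be scrupulous, the one point meriting a sentence is that $\,E\,$ is indeed a sub-representation of $\,\otimes^pV\,$ (given by hypothesis) so that Proposition \ref{Semisimple} genuinely applies, and that the resulting map $\,\mathrm{Alt}|_E\,$ need not be surjective onto $\,\Lambda^pV\,$ — but this does not affect the statement, which only claims the equivariant maps out of $\,E\,$ are the multiples of skew-symmetrisation.
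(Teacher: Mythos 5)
Your argument is exactly the one the paper intends: the corollary is stated there with the single phrase ``Combining the previous two propositions, it follows,'' and the combination meant is precisely your extension via Proposition \ref{Semisimple} followed by Proposition \ref{CorolarioMain} and restriction back to $E$. The proposal is correct and matches the paper's approach.
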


\begin{remark}\label{0.4} We will make use of the following properties of the skew-sy\-mme\-trisa\-tion operator $\,h$:\smallskip

-- $h(T\otimes T')=h(T)\wedge h(T')\,$ for any covariant tensors $\,T,T'$,\smallskip

-- $h(\omega)=q!\,\omega$ for any $q$-form $\,\omega$,\smallskip

-- $h(\nabla\omega)=q!\,\D\omega$ for any differential $q$-form $\omega$ on $\,\mathbb{R}^n$, where $\,\nabla\,$ denotes the standard flat connection.
\end{remark}

\subsection{Differential operators on forms}

Let  $\, E\to X\,$ be a bundle (or a submersion) on a smooth manifold $\,X$. Let us denote by $\,\mathcal{E}\,$ the sheaf of sections of that bundle: $\,\mathcal{E}\,$ is a contravariant functor, defined over the category of open sets of $\,X\,$ and inclusions between them, that assigns, to each open set $\,U\subseteq X$, the set $\,\mathcal{E}(U)\,$ of smooth sections defined over $\,U$, and to each inclusion between open sets $\,V\hookrightarrow U$, the restriction map $\,\mathcal{E}(U)\to\mathcal{E}(V)$.

Let us denote by $\,J^rE\to X\,$ the bundle of $r$-jets of sections of $\,E\to X$.\medskip

\begin{definition}
The \textbf{bundle of $\infty$-jets} is the inverse limit $$J^\infty E\, :=\,\varprojlim J^rE\ ,$$ endowed with the initial topology of the projections $\, \pi_r\colon J^\infty E\to J^rE$. 
\end{definition}

Let $\,Y\,$ be a smooth manifold. A continuous map $\,\varphi\colon J^\infty E\to Y\,$ is said \textbf{smooth} if, for any $\infty$-jet $\,j^\infty_xs\in J^\infty E\,$ there exist a natural number $\,r\,$ and a smooth map $\,\varphi_r\colon J^rE\supseteq U\to Y$, defined on a neighbourhood $\,U\,$ of $\,j^r_xs$, such that $\,\varphi=\varphi_r\circ\pi_r$ in the neighbourhood $\,\pi_r^{-1}(U)\,$ of $\,j^\infty_xs$.\medskip

\begin{definition} Let $\,E\to X\,$ and  $\,F\to X\,$ be two bundles. A \textbf{differential operator} $\,\tilde P\colon E\leadsto F\,$ is a smooth morphism of bundles $\,\tilde P\colon J^\infty E\to F$. A differential operator $\,\tilde P\,$ is said of \textbf{order} $\leq r$ if there exists a morphism of bundles $\,\tilde P_r\colon J^rE\to F\,$ such that $\,\tilde P=\tilde P_r\circ\pi_r$.
\end{definition}

Let $\,\mathcal{E}\,$ and $\,\mathcal{F}\,$ be the sheaves of sections of two bundles $\,E\to X\,$ and $\,F\to X$. A differential operator $\,\tilde P\colon E\leadsto F$ can be understood as a morphism of sheaves (that is, a morphism of functors):
 $$P\colon\mathcal{E}\,\longrightarrow\,\mathcal{F}\quad,\quad P(s)(x):=\,\tilde P(j^\infty_xs)\ .$$

Conversely, let us see that any morphism of sheaves, satisfying certain re\-gu\-larity condition, is a differential operator.\medskip
 
If $\,T\,$ is a smooth manifold, let us denote $\,X_T := X\times T$. Any open set $\,U \subset X_T\,$ can be thought as a family of open sets $\,U_t \subset X$, where $\,U_t=U\cap(X\times t)$. 

A family of sections $\,\{ \, s_t \colon U_t \to E \, \}_{t \in T}\,$ defines a map: 
$$ s \colon U \to E \quad , \quad s (t,x) := s_t (x) \  $$ and the family $\,\{ s_t \}_{t \in T}\,$ is said {\it smooth} (with respect to the parameters $\,t \in T$) precisely when the map $\,s \colon U \to E\,$ is  smooth.
 
\begin{definition} With the previous notations, a morphism of sheaves $\,P \colon \mathcal{E} \to \mathcal{F}\,$ is \textbf{regular} if, for any smooth manifold $\,T\,$ and any smooth family of sections $\,\{ s_t \colon U_t \to E \}_{t \in T}$, the family $\,\{ P(s_t) \colon U_t \to F \}_{t\in T}\,$ is also smooth.
\end{definition}

\begin{theorem}[Peetre-Slovak]\label{PeetreSlovak} There exists a bijection
$$\mathrm{Diff}(E,F)\,=\,\mathrm{Hom}_{\mathrm{reg}}(\mathcal{E},\mathcal{F})\quad ,\quad \tilde P\mapsto P\ ,$$
where $\,\mathrm{Diff}(E,F)\,$ stands for the set of differential operators and $\,\mathrm{Hom}_{\mathrm{reg}}(\mathcal{E},\mathcal{F})\,$ for that of regular morphisms of sheaves.
\end{theorem}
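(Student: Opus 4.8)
\medskip
\noindent\emph{Proof sketch.} The assignment $\tilde P\mapsto P$ is straightforward, so the content of the statement is its surjectivity: every regular morphism of sheaves must come from a differential operator. The plan is to build the inverse in the obvious way — given a regular $P\colon\mathcal E\to\mathcal F$, set $\tilde P(j^\infty_x s):=P(s)(x)$ — and then to verify, in turn, that $\tilde P$ is well defined on $J^\infty E$ and that it is smooth in the sense of the definitions above. The easy half first: if $\tilde P\colon J^\infty E\to F$ is a differential operator, then $P(s)(x):=\tilde P(j^\infty_x s)$ is a morphism of sheaves, and it is regular because the $\infty$-jet prolongation of a smooth family $\{s_t\}$ is continuous into $J^\infty E$ (smooth into every $J^rE$), on which $\tilde P$ acts smoothly by hypothesis; moreover $\tilde P\mapsto P$ is injective, since by Borel's lemma every point of $J^\infty E$ is the $\infty$-jet of a genuine section. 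Note also, for later use, that any morphism of sheaves is automatically \emph{local} — if two sections agree on an open set, so do their images, since $P$ commutes with restriction — so that $P(s)(x)$ depends only on the germ of $s$ at $x$.

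Next, $\tilde P$ is well defined on $J^\infty E$, i.e.\ $P(s)(x)$ depends only on $j^\infty_x s$. The question is local, so assume $X=\RR^n$, $x=0$ and both bundles trivial; if $j^\infty_0 s=j^\infty_0\bar s$, then $h:=s-\bar s$, read in the trivialization, is flat at $0$. Fix a bump function $\chi$ equal to $1$ on the unit ball with support in the ball of radius $2$, and put
$$s_t\,:=\,\bar s+\chi(\,\cdot/t\,)\,h\ \ (t\neq 0),\qquad s_0\,:=\,\bar s\ .$$
Flatness of $h$ at $0$ makes $(t,y)\mapsto\chi(y/t)h(y)$ jointly smooth across $t=0$: on the support of $\chi(\,\cdot/t\,)$ one has $|y|\le 2|t|$, so every $t$- and $y$-derivative is dominated by a negative power of $t$ against an arbitrarily large power of $|y|$, hence vanishes as $t\to0$. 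Therefore $\{s_t\}_{t\in\RR}$ is a smooth family of sections, and $t\mapsto P(s_t)(0)$ is continuous by regularity; but for $t\neq 0$ we have $s_t=s$ on the ball of radius $|t|$, whence $P(s_t)(0)=P(s)(0)$ by locality, and letting $t\to0$ gives $P(\bar s)(0)=P(s)(0)$. Thus $\tilde P$ is a well-defined morphism of bundles $J^\infty E\to F$, and one still has $P(s)(x)=\tilde P(j^\infty_x s)$.

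It remains to show that $\tilde P$ is smooth in the required sense — near every $\infty$-jet it factors, by a smooth map, through a finite jet bundle $J^rE$ — and this is where essentially all the difficulty lies; it is the nonlinear Peetre theorem proper. I would argue by contradiction in a chart: if $\tilde P$ fails to be smooth near $j^\infty_0 s_0$, one extracts points $x_k\to 0$, radii $\varepsilon_k\downarrow 0$ with the balls $B(x_k,\varepsilon_k)$ pairwise disjoint, and pairs of sections $s_k,\bar s_k$ agreeing off $B(x_k,\varepsilon_k)$, with contact of order $\ge k$ at $x_k$ and with $|P(s_k)(x_k)-P(\bar s_k)(x_k)|$ bounded below after normalization. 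Since the $\varepsilon_k$ shrink fast and the contact orders grow, these local data glue into a single smooth one-parameter family $\{\sigma_t\}_{t\in\RR}$ with $\sigma_0=\bar s$ and, for $t=1/k$, $\sigma_{1/k}=s_k$ on $B(x_k,\varepsilon_k)$ and $=\bar s$ off it; by locality $|P(\sigma_{1/k})(x_k)-P(\sigma_0)(x_k)|$ stays bounded below, while $j^\infty_0\sigma_t$ is independent of $t$, so by the previous step $P(\sigma_t)(0)$ is constant in $t$ — contradicting the continuity of $\{P(\sigma_t)\}$ given by regularity, since $x_k\to0$ and $1/k\to0$. The main obstacle is precisely this last step: the multiscale bookkeeping in the gluing, and the fact that Peetre's original argument is written for \emph{linear} operators whereas here one must run it for a merely regular $P$; a clean treatment — following \cite{KMS} — isolates a finite-dimensional quantitative lemma bounding how fast the finite jets of a smooth family of sections can vary, and applies it to the glued family.
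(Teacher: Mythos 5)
You should first be aware that the paper does not prove this theorem at all: it is one of the two results the authors explicitly take as a black box (``The exposition of this paper will be self-contained, except for the Peetre--Slov\'ak theorem\dots''), with references to Slov\'ak's paper and to the authors' own expository note. So there is no internal proof to compare against, and your sketch has to stand on its own. The parts you work out in detail are correct. The easy direction, the injectivity via Borel's lemma, and the locality of any morphism of sheaves are fine. The well-definedness of $\tilde P$ on $\infty$-jets via the family $s_t=\bar s+\chi(\cdot/t)h$ is the standard argument and your verification of joint smoothness across $t=0$ (flatness of $h$ at $0$ beating the negative powers of $t$ on the shrinking support) is sound; the only point to flag is that for a general bundle (a submersion, as the paper allows) the difference $s-\bar s$ only makes sense after choosing a fibered chart around the common value $s(0)=\bar s(0)$, which is harmless since the whole question is local and germ-level.

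The genuine gap is where you say it is, but saying where it is does not close it. Two things are missing. First, the locally-finite-order step: your contradiction argument requires producing sections $s_k$ that (i) agree with the background section $\bar s$ outside pairwise disjoint balls $B(x_k,\varepsilon_k)$ and to order $\ge k$ at $x_k$, (ii) have $C^k$-distance to $\bar s$ decaying fast enough that the glued family $\{\sigma_t\}$ is jointly smooth in $(t,x)$, and (iii) still satisfy a uniform lower bound on $|P(s_k)(x_k)-P(\bar s)(x_k)|$. For a linear operator one gets (ii) by rescaling without losing (iii); for a merely regular (nonlinear) $P$ rescaling destroys (iii), and extracting sections satisfying (ii) and (iii) simultaneously from the bare negation of ``finite order near $j^\infty_0 s_0$'' is exactly the content of the quantitative Whitney-type lemma in Kol\'a\v{r}--Michor--Slov\'ak that you invoke by name but do not prove. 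Without it the argument is a plan, not a proof. Second, even granting the factorization $\tilde P=\tilde P_r\circ\pi_r$ as a map of sets near $j^\infty_0 s_0$, you never show that $\tilde P_r$ is \emph{smooth} on its open domain in $J^rE$; this needs one further application of regularity, to the tautological smooth family of (say, polynomial) sections parametrized by a chart of $J^rE$ itself, whose image under $P$ evaluated at the base point recovers $\tilde P_r$ and is smooth in the parameters by hypothesis. That step is short but it is part of the statement being proved and should appear.
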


This theorem was first proved by J. Peetre, in the case of $\mathbb{R}$-linear differential operators. Later on, J. Slov\'{a}k (\cite{Peetre}) established a general result that includes the above statement as a particular case (see \cite{PeetreRevisited}).\medskip

\begin{notation}
If $\,X\,$ is a smooth manifold, let us write $\,\Omega^p_X\,$ for the sheaf of differential $p$-forms on $\,X$; that is, for the sheaf of sections of the bundle $\,\Lambda^pT^*X\to X$.
\end{notation}

\begin{lema}\label{1.2} Any differential operator $\,\tilde P\colon\Lambda^pT^*\mathbb{R}^n\leadsto\Lambda^qT^*\mathbb{R}^n$, that is invariant under translations and homotheties, has finite order.
\end{lema}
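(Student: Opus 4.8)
The plan is to promote the automatic \emph{local} finiteness of the order of $\tilde P$ — valid for any smooth morphism out of $J^\infty$ — to a \emph{uniform} bound, using translations to make it independent of the base point and homotheties to reduce an arbitrary $\infty$-jet to the $\infty$-jet of the zero form.

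First, applying the smoothness of $\tilde P$ (in the sense defined above) at the single $\infty$-jet $j^\infty_0\mathbf 0$ of the zero $p$-form at the origin, one obtains an integer $r$, an open neighbourhood $U$ of $j^r_0\mathbf 0$ in $J^r\Lambda^pT^*\RR^n$, and a smooth bundle morphism $\tilde P_r$ on $U$ with values in $\Lambda^qT^*\RR^n$ representing $\tilde P$ there: $\tilde P=\tilde P_r\circ\pi_r$ on $\pi_r^{-1}(U)$. Since $\tilde P$ commutes with pull-back along the translations $\tau_a(y)=y+a$, and these fix the zero form, conjugating $U$ and $\tilde P_r$ by the translation carrying $0$ to $x$ shows that, for every $x\in\RR^n$, there is a neighbourhood $U_x$ of $j^\infty_x\mathbf 0$ in $J^\infty$ on which $\tilde P$ factors through $\pi_r$ — crucially, with the \emph{same} $r$ for all $x$.

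Next comes the homothety reduction. Fix $x$ and let $h_\lambda$ be the homothety of ratio $\lambda$ centred at $x$. In linear coordinates $h_\lambda^*$ multiplies the block of the $\infty$-jet at $x$ made of the $k$-th order derivatives of the coefficients by the scalar $\lambda^{p+k}$; as $p\geq1$ all these scalars tend to $0$, so $j^\infty_x(h_\lambda^*s)\to j^\infty_x\mathbf 0$ in the inverse-limit topology of $J^\infty$, for every $p$-form $s$. Also, since $h_\lambda$ fixes $x$ and has differential $\lambda\cdot\mathrm{id}$ there, the invariance of $\tilde P$ yields $\tilde P\big(j^\infty_x(h_\lambda^*s)\big)=\lambda^q\,\tilde P(j^\infty_x s)$.

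Finally, let $s,s'$ be two $p$-forms with $j^r_x s=j^r_x s'$. A diffeomorphism fixing $x$ preserves $r$-jets there, so $j^r_x(h_\lambda^*s)=j^r_x(h_\lambda^*s')$, and for $\lambda$ small enough both $\infty$-jets $j^\infty_x(h_\lambda^*s)$ and $j^\infty_x(h_\lambda^*s')$ lie in $U_x$; writing $\tilde P$ there through its $r$-jet representative,
$$\lambda^q\,\tilde P(j^\infty_x s)=\tilde P\big(j^\infty_x(h_\lambda^*s)\big)=\tilde P\big(j^\infty_x(h_\lambda^*s')\big)=\lambda^q\,\tilde P(j^\infty_x s')\ ,$$
and division by $\lambda^q\neq0$ gives $\tilde P(j^\infty_x s)=\tilde P(j^\infty_x s')$. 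Hence $\tilde P(j^\infty_x s)$ depends only on $j^r_x s$, which defines a bundle morphism $J^r\Lambda^pT^*\RR^n\to\Lambda^qT^*\RR^n$ with $\tilde P=\tilde P_r\circ\pi_r$; this morphism is smooth because $\pi_r$ is a surjective submersion, so locally it is $\tilde P$ composed with a smooth section of $\pi_r$. Therefore $\tilde P$ has order $\leq r$. I expect the only delicate points to be the two uniformity claims: that translation-invariance really furnishes a single integer $r$ working at every base point, and that $j^\infty_x(h_\lambda^*s)\to j^\infty_x\mathbf 0$ holds in the genuine $J^\infty$-topology (so the jet eventually enters $U_x$), not merely coordinate by coordinate. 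The rest is bookkeeping.
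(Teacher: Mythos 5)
Your argument is correct and follows essentially the same route as the paper's proof: smoothness at the zero $\infty$-jet yields a finite order $r$ on a neighbourhood, translation invariance makes this uniform in the base point, and a homothety of small ratio (using $p\geq 1$, as in the paper) carries any jet into that neighbourhood while only rescaling the output by $\lambda^q$. The paper packages the last step slightly differently --- exhibiting the local finite-order representative directly as the conjugate $\tau_{\lambda^{-1}}^*\circ\tilde P_r\circ\tau_\lambda^*$ rather than first proving dependence on the $r$-jet and then checking smoothness via a section of $\pi_r$ --- but this is only bookkeeping.
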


The condition of a differential operator $\,\tilde P\,$ being invariant by a diffeomorphism $\,\tau\colon\mathbb{R}^n\to\mathbb{R}^n\,$ means that the following square is commutative
$$\begin{CD}
J^\infty\Lambda^pT^*\mathbb{R}^n @>{\tau^*}>> J^\infty\Lambda^pT^*\mathbb{R}^n\\
@V{\tilde P}VV @VV{\tilde P}V \\
\Lambda^qT^*\mathbb{R}^n @>{\tau^*}>> \Lambda^qT^*\mathbb{R}^n
\end{CD}$$

In terms of the corresponding morphism of sheaves $\,P\colon\Omega^p_{\mathbb{R}^n}\to\Omega^q_{\mathbb{R}^n}$, this amounts to saying $\,P(\tau^*\omega)=\tau^*P(\omega)$.\medskip

\noindent{\it Proof of \ref{1.2}:} As the operator $\,\tilde P\,$ is invariant under translations, it is determined by its restriction $\,\tilde P\colon J^\infty_0(\Lambda^pT^*\mathbb{R}^n) \longrightarrow \Lambda^qT^*_0\mathbb{R}^n$ to the fibres over the origin $\,0\in\mathbb{R}^n$. The smoothness of this map at the zero $\infty$-jet $\,j^\infty_00\in J^\infty_0(\Lambda^pT^*\mathbb{R}^n)\,$ implies that there exist a natural number $\,r\,$ and a smooth map $\,\tilde P_r\colon J^r_0(\Lambda^pT^*\mathbb{R}^n)\supseteq U \longrightarrow \Lambda^qT^*_0\mathbb{R}^n$, defined on an open set $\,U\,$ of the zero $r$-jet, such that $\,\tilde P=\tilde P_r\circ\pi_r\,$ over $\,\pi_r^{-1}(U)$.

Now, a simple computation in local coordinates shows that, for any fixed jet $\,j^\infty_0\omega\in J^\infty_0\Lambda^pT^*\mathbb{R}^n$, there exists an homothety $\,\tau_\lambda\colon\mathbb{R}^n\to\mathbb{R}^n$, $\,\tau_\lambda(x)=\lambda x$, with $\,\lambda\,$ sufficiently small,  such that $\,\tau_\lambda^*(j^r_0\omega)\,$ belongs to $\,U$, that is, $\,\tau_\lambda^*(j^\infty_0\omega)\in \pi_r^{-1}(U)$. Therefore, 
\begin{align*}
\tilde P(j^\infty_0\omega)\, &=\,\tilde P(\tau_{\lambda^{-1}}^*\tau_\lambda^*(j^\infty_0\omega))\,=\,\tau_{\lambda^{-1}}^*\left(\tilde P(\tau_\lambda^*(j^\infty_0\omega))\right)\,=\,\tau_{\lambda^{-1}}^*\left(\tilde P_r\left(\pi_r\tau_\lambda^*\left(j^\infty_0\omega\right)\right)\right) \\
&=\, \tau_{\lambda^{-1}}^*\left(\tilde P_r\left( \tau_\lambda^*\pi_r\left(j^\infty_0\omega\right)\right)\right)\,=\,\tau_{\lambda^{-1}}^*\left(\tilde P_r\left(\tau_\lambda^*\left(j^r_0\omega\right)\right)\right)
=\,\tilde Q_r(j^r_x\omega)\ .
\end{align*}
where $\,\tilde Q_r :=\tau_{\lambda^{-1}}^*\circ\tilde P_r\circ\tau_\lambda^*$.

The same formula $\,\tilde P=\tilde Q\circ\pi_r\,$ holds for any point in the neighbourhood of $\,j^\infty_0\omega\,$ formed by jets $\,j^\infty_0\bar\omega\,$ such that $\,\tau_\lambda^*(j^\infty_0\bar\omega)\in \pi_r^{-1}(U)$. 
Hence, $\,\tilde P\,$ factors through the projection $\,\pi_r$.

\hfill$\square$

\medskip
\begin{notation} 
For the rest of the section, let us fix a natural number $\, n$.  Let $\,\textbf{Man}_n\,$ denote the category of smooth manifolds of dimension $\,n\,$ and local diffeomorphisms between them.
\end{notation}

Let $\, \Omega^p\,$ denote the contravariant functor over $\,\textbf{Man}_n\,$ that assigns, to each smooth $n$-manifold $\,X$, the set $\,\Omega^p(X)\,$ of differential $p$-forms on $\,X$, and, to each local diffeomorphism $\,\tau\colon X\to Y$, the map $\,\tau^*\colon\Omega^p(Y)\to\Omega^p(X)\,$ (inverse image of differential forms).

\begin{definition} A \textbf{natural} morphism $\,P\colon\Omega^p\to\Omega^q\,$ is a morphism of functors over $\,\textbf{Man}_n$. 

In other words, a natural morphism $\,P\colon\Omega^p\to\Omega^q\,$  amounts to giving, for each $n$-manifold $\,X$, a map $\,P\colon\Omega^p(X)\to\Omega^q(X)$ such that for each local diffeomorphism $\,\tau\colon X\to Y$, it holds 
$$P(\tau^*\omega)\,=\,\tau^*(P(\omega))\qquad \forall \, \omega\in\Omega^p(Y)\ .$$
\end{definition}

\medskip
Let $\,X\,$ be an $n$-manifold. The sheaf of $p$-forms $\,\Omega_X^p\,$ over $\,X\,$ is the restriction of $\,\Omega^p\,$ to the subcategory of $\,\textbf{Man}_n\,$ formed by the open sets of $\,X\,$ and the inclusions between them. 
A natural morphism $\,P\colon\Omega^p\to\Omega^q\,$ defines, by restriction to such subcategory, a morphism $\,P\colon\Omega_X^p\to\Omega_X^q\,$ of sheaves over $X$.

\begin{definition} A natural morphism $\,P\colon\Omega^p\to\Omega^q\,$ is said \textbf{regular} if its restriction $\,P\colon\Omega^p_X\to\Omega^q_X\,$ to any $n$-manifold $\,X\,$ is a regular morphism of sheaves over $\,X$.
\end{definition}

Let $\,P\colon\Omega^p\to\Omega^q\,$ be a regular and natural morphism. As any $n$-manifold $\,X\,$ is locally diffeomorphic to $\,\mathbb{R}^n$, the natural morphism $\,P\colon\Omega^p\to\Omega^q\,$ is determined by its restriction to the manifold $\,\mathbb{R}^n$, that is, by the regular morphism of sheaves $\,P\colon\Omega^p_{\mathbb{R}^n}\to\Omega^q_{\mathbb{R}^n}$. By the Peetre-Slov\'{a}k theorem, such a morphism is in fact defined by a differential operator $\,\tilde P\colon J^\infty\Lambda^pT^*\mathbb{R}^n\longrightarrow \Lambda^qT^*\mathbb{R}^n$. By naturalness, this differential operator is invariant under the action of any diffeomorphism $\,\mathbb{R}^n\to\mathbb{R}^n$, and hence, due to Lemma \ref{1.2}, has finite order. 

Summing up,

\begin{proposicion}\label{1.3} Any regular and natural morphism $\,P\colon\Omega^p\to\Omega^q\,$ is a differential operator of finite order.\end{proposicion}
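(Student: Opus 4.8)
The plan is to reduce everything to the model manifold $\mathbb{R}^n$ and then invoke the two technical results already at our disposal: the Peetre--Slov\'{a}k theorem and Lemma \ref{1.2}.

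First I would observe that a regular natural morphism $\,P\colon\Omega^p\to\Omega^q\,$ is entirely recovered from its restriction to $\,\mathbb{R}^n$. Every $n$-manifold $\,X\,$ admits an atlas of charts $\,\varphi_\alpha\colon U_\alpha\to\mathbb{R}^n$, which are local diffeomorphisms; naturalness forces $\,P(\omega)|_{U_\alpha}=\varphi_\alpha^*\,P\bigl((\varphi_\alpha^{-1})^*(\omega|_{U_\alpha})\bigr)$, so the value of $\,P\,$ on an arbitrary form is dictated by the sheaf morphism $\,P\colon\Omega^p_{\mathbb{R}^n}\to\Omega^q_{\mathbb{R}^n}$, and the locally defined values agree on overlaps precisely because $\,P\,$ commutes with the transition diffeomorphisms. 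By hypothesis this sheaf morphism on $\,\mathbb{R}^n\,$ is regular.

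Next, I would apply Theorem \ref{PeetreSlovak} to the bundles $\,\Lambda^pT^*\mathbb{R}^n\,$ and $\,\Lambda^qT^*\mathbb{R}^n$: the regular sheaf morphism $\,P\colon\Omega^p_{\mathbb{R}^n}\to\Omega^q_{\mathbb{R}^n}\,$ is induced by a differential operator $\,\tilde P\colon J^\infty\Lambda^pT^*\mathbb{R}^n\to\Lambda^qT^*\mathbb{R}^n$, a priori of possibly infinite order. To control the order, use naturalness once more: for every diffeomorphism $\,\tau\colon\mathbb{R}^n\to\mathbb{R}^n\,$ one has $\,P(\tau^*\omega)=\tau^*P(\omega)$, which is exactly the commutativity of the square expressing $\tau$-invariance of $\,\tilde P$. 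In particular $\,\tilde P\,$ is invariant under all translations and homotheties of $\,\mathbb{R}^n$, so Lemma \ref{1.2} applies and shows that $\,\tilde P\,$ has finite order; transporting this conclusion back through the charts, $\,P\,$ is a differential operator of finite order on every $n$-manifold.

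There is no real obstacle here beyond bookkeeping: the substance is entirely contained in Theorem \ref{PeetreSlovak} (which produces a differential operator out of a regular morphism of sheaves) and in Lemma \ref{1.2} (which bounds the order once translation- and homothety-invariance are known). The only point demanding a moment's care is the gluing in the first step, namely that the chart-wise formula for $\,P\,$ is independent of the chosen chart and hence defines a genuine global operator; this is immediate from the functoriality built into the notion of a natural morphism.
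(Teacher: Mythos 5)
Your proposal is correct and follows exactly the paper's own argument: reduce to $\,\mathbb{R}^n\,$ by naturalness, apply the Peetre--Slov\'{a}k theorem to obtain a differential operator, and then invoke Lemma \ref{1.2} via invariance under translations and homotheties to bound the order. The extra detail you supply on the chart-wise gluing is harmless and consistent with what the paper leaves implicit.
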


To be more precise, the restriction of $\,P\,$ to each $n$-manifold $\,X\,$ is a morphism of sheaves $\,P\colon\Omega^p_X\to\Omega^q_X\,$ corresponding to a differential operator $\,\tilde P\colon \Lambda^pT^*X\leadsto\Lambda^qT^*X\,$ of finite order. This order, that is common for all manifolds, will be called the \textbf{order} of the morphism $\,P\colon\Omega^p\to\Omega^q$.

\begin{definition} For each natural number $\,r$, let us write $\,G^r\,$ for the Lie group of $r$-jets $\,j^r_0\tau\,$ at the origin $\,0\in\mathbb{R}^n\,$ of local diffeomorphisms satisfying $\,\tau(0)=0$.
\end{definition}

The group $\,G^{r+1}\,$ acts on the fibre $\,J^r_0(\Lambda^pT^*\mathbb{R}^n)$,
$$(j^{r+1}_0\tau)\cdot (j^r_0\omega):=\, j^r_0\left(\tau_*\omega\right)\ .$$

\begin{proposicion}\label{1.4}
There exists an injective map:
$$\left\{\begin{aligned}\mathrm{Regular\ and \ natural \ morphisms \ }\\
P\colon\Omega^p\longrightarrow\Omega^q\ \mathrm{\ of \ order \ }r\quad\end{aligned}\right\}\,\subseteq\,
 \left\{\begin{aligned}\mathrm{Smooth \ }G^{r+1}\text{--}\,\mathrm{equivariant \ maps \ }\\ \tilde P\colon J^r_0(\Lambda^pT^*\mathbb{R}^n)\longrightarrow \Lambda^qT^*_0\mathbb{R}^n\qquad\end{aligned}\right\}$$
\end{proposicion}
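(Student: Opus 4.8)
\noindent\textit{Proof proposal.} The plan is to assign, to a regular and natural morphism $\,P\colon\Omega^p\to\Omega^q\,$ of order $\,r$, the map obtained from the associated differential operator over $\,\mathbb{R}^n\,$ by restricting to the fibres over the origin, and then to check that this map is $\,G^{r+1}$-equivariant and that the assignment is injective. First I would invoke Proposition \ref{1.3}: the restriction of $\,P\,$ to $\,\mathbb{R}^n\,$ is a differential operator of order $\,\le r$, that is, a morphism of bundles $\,\tilde P_r\colon J^r(\Lambda^pT^*\mathbb{R}^n)\to\Lambda^qT^*\mathbb{R}^n$; restricting it to the fibres over $\,0\in\mathbb{R}^n\,$ yields a smooth map $\,\tilde P\colon J^r_0(\Lambda^pT^*\mathbb{R}^n)\to\Lambda^qT^*_0\mathbb{R}^n$, which is the candidate image of $\,P$.

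For the $\,G^{r+1}$-equivariance, I would take a local diffeomorphism $\,\tau\,$ of $\,\mathbb{R}^n\,$ with $\,\tau(0)=0\,$ and apply naturalness to $\,\tau^{-1}$ (using $\,(\tau^{-1})^*=\tau_*$), obtaining $\,P(\tau_*\omega)=\tau_*(P(\omega))$. Evaluating the two $q$-forms at the origin gives
$$\tilde P\big(j^r_0(\tau_*\omega)\big)\,=\,\big(\tau_*P(\omega)\big)_0\ .$$
The key observations are then: $\,j^r_0(\tau_*\omega)\,$ depends on $\,\tau\,$ only through the jet $\,j^{r+1}_0\tau$, since computing the $r$-jet at the origin of $\,\tau_*\omega\,$ uses the derivatives of $\,\tau\,$ up to order $\,r+1\,$ and those of $\,\omega\,$ up to order $\,r$; and $\,\big(\tau_*P(\omega)\big)_0\,$ depends on $\,\tau\,$ only through $\,d\tau_0$. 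By the very definition of the action of $\,G^{r+1}\,$ on $\,J^r_0(\Lambda^pT^*\mathbb{R}^n)$, the left-hand side is $\,\tilde P\big((j^{r+1}_0\tau)\cdot(j^r_0\omega)\big)$, while the right-hand side is the image of $\,\tilde P(j^r_0\omega)\,$ under the induced action of $\,j^{r+1}_0\tau\,$ on $\,\Lambda^qT^*_0\mathbb{R}^n\,$ (an action that factors through the projection $\,G^{r+1}\to G^1=Gl_n$). Since every element of $\,G^{r+1}\,$ is of the form $\,j^{r+1}_0\tau$, this is exactly the required equivariance.

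For injectivity, I would exploit naturalness under the translations $\,t_v(x)=x+v$: as a translation induces the identity on $\,\Lambda^pT^*\mathbb{R}^n\,$ and on $\,\Lambda^qT^*\mathbb{R}^n$, evaluating $\,P((t_{-v})_*\omega)=(t_{-v})_*(P(\omega))\,$ at the origin shows that $\,\tilde P_r(j^r_v\omega)=\tilde P\big(j^r_0((t_{-v})_*\omega)\big)$. Hence $\,\tilde P\,$ on the fibres over the origin determines the bundle morphism $\,\tilde P_r$, therefore the morphism of sheaves $\,P\colon\Omega^p_{\mathbb{R}^n}\to\Omega^q_{\mathbb{R}^n}$, and finally --- since every $n$-manifold is locally diffeomorphic to $\,\mathbb{R}^n\,$ and $\,P\,$ is natural --- the morphism $\,P\colon\Omega^p\to\Omega^q\,$ itself. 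So distinct $\,P$'s have distinct images.

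I expect the main obstacle to be the bookkeeping in the equivariance step: checking carefully that the $r$-jet of $\,\tau_*\omega\,$ at the origin is a genuine function of $\,j^{r+1}_0\tau\,$ and $\,j^r_0\omega$, and that the push-forward $\,\big(\tau_*P(\omega)\big)_0\,$ sees only $\,d\tau_0$. Everything else is formal: the smoothness of $\,\tilde P\,$ comes from Peetre--Slov\'{a}k through Proposition \ref{1.3}, and the remaining identities are just naturalness specialised to translations and to diffeomorphisms fixing the origin.
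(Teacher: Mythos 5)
Your proposal is correct and follows essentially the same route as the paper: reduce to $\mathbb{R}^n$, use regularity (via Proposition \ref{1.3} and Peetre--Slov\'{a}k) to get a bundle morphism $J^r(\Lambda^pT^*\mathbb{R}^n)\to\Lambda^qT^*\mathbb{R}^n$, use translation-invariance to reduce to the fibre over the origin, and read off $G^{r+1}$-equivariance from naturalness under diffeomorphisms fixing $0$. The paper states these steps more tersely; your expansion of the equivariance and injectivity bookkeeping is consistent with it.
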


The relation between $\,P\,$ and $\,\tilde P\,$ is determined by the equality:
$$P(\omega)_{x=0}\,=\,\tilde P(j^r_0\omega)\qquad\forall\, \omega\in\Omega^p(\mathbb{R}^n)\ .$$

\proof Any regular and natural morphism $\,P\colon\Omega^p\to\Omega^q$ of order $\,r\,$ is determined by its restriction to $\mathbb{R}^n$, that is a regular morphism of sheaves  $\,P\colon\Omega^p_{\mathbb{R}^n}\to\Omega^q_{\mathbb{R}^n}\,$ invariant under diffeomorphisms. As it is regular, it corresponds to a differential operator $\,\tilde P\colon J^r(\Lambda^pT^*\mathbb{R}^n)\longrightarrow \Lambda^qT^*\mathbb{R}^n$, that, by naturalness, is invariant under diffeomorphisms. Due to this last property, $\tilde P$ is determined by its restriction to the fibres over the origin, $\,\tilde P\colon J^r_0(\Lambda^pT^*\mathbb{R}^n)\longrightarrow \Lambda^qT^*_0\mathbb{R}^n$, which, in turn, is a smooth,  $G^{r+1}$-equivariant map. 

\qed

With more generality, in this paper we will consider the functors $\,\Omega^{p_1}\oplus\cdots\oplus\Omega^{p_k}\,$ over $\,\textbf{Man}_n$. The definition of naturalness, as well as Propositions \ref{1.3} and \ref{1.4}, all trivially extend to morphisms $\,\Omega^{p_1}\oplus\cdots\oplus\Omega^{p_k}\longrightarrow\Omega^q$. In particular,

\begin{proposicion}\label{1.5}
There exists an injective map
$$\begin{CD}
\left\{\begin{aligned}\mathrm{Regular \ and \ natural \ morphisms \ of \ order \ }r\\
P\colon\Omega^{p_1}\oplus\cdots\oplus\Omega^{p_k}\,\longrightarrow\,\Omega^q\qquad\qquad\end{aligned}\right\}\\
|\bigcap^{\phantom{\bigcap}}
_{\phantom{\bigcap}} \\
 \left\{\begin{aligned} &\mathrm{Smooth\ }G^{r+1}\text{--}\,\mathrm{equivariant \ maps\ }\ \\ \tilde P\colon J^r_0(\Lambda^{p_1}&T^*\mathbb{R}^n)\times\cdots\times J^r_0(\Lambda^{p_k}T^*\mathbb{R}^n)\,\longrightarrow\, \Lambda^qT^*_0\mathbb{R}^n\end{aligned}\right\}\end{CD}$$
\end{proposicion}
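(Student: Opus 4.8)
The plan is to repeat, essentially verbatim, the argument that established Propositions \ref{1.3} and \ref{1.4}, replacing the single bundle $\Lambda^pT^*\mathbb{R}^n$ by the direct sum $E:=\Lambda^{p_1}T^*\mathbb{R}^n\oplus\cdots\oplus\Lambda^{p_k}T^*\mathbb{R}^n$, whose sheaf of sections is $\Omega^{p_1}_{\mathbb{R}^n}\oplus\cdots\oplus\Omega^{p_k}_{\mathbb{R}^n}$. First I would observe that a regular and natural morphism $P\colon\Omega^{p_1}\oplus\cdots\oplus\Omega^{p_k}\to\Omega^q$ is determined by its restriction to $\mathbb{R}^n$, since every $n$-manifold is locally diffeomorphic to $\mathbb{R}^n$ and $P$ commutes with local diffeomorphisms. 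That restriction is a regular morphism of sheaves $\Omega^{p_1}_{\mathbb{R}^n}\oplus\cdots\oplus\Omega^{p_k}_{\mathbb{R}^n}\to\Omega^q_{\mathbb{R}^n}$, invariant under all diffeomorphisms of $\mathbb{R}^n$.

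Next I would apply the Peetre--Slov\'ak theorem to the pair of bundles $E\to\mathbb{R}^n$ and $\Lambda^qT^*\mathbb{R}^n\to\mathbb{R}^n$; since the sheaf of sections of $E$ is $\Omega^{p_1}_{\mathbb{R}^n}\oplus\cdots\oplus\Omega^{p_k}_{\mathbb{R}^n}$, the regular morphism above is a differential operator $\tilde P\colon J^\infty E\to\Lambda^qT^*\mathbb{R}^n$. The analogue of Lemma \ref{1.2} — whose proof uses only invariance under translations and homotheties together with smoothness of $\tilde P$ at the zero jet, and which carries over word for word — shows that $\tilde P$ has finite order $r$, i.e. factors as $\tilde P_r\circ\pi_r$ for some morphism of bundles $\tilde P_r\colon J^rE\to\Lambda^qT^*\mathbb{R}^n$. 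Using the canonical identification $J^rE=J^r(\Lambda^{p_1}T^*\mathbb{R}^n)\times_{\mathbb{R}^n}\cdots\times_{\mathbb{R}^n}J^r(\Lambda^{p_k}T^*\mathbb{R}^n)$ and the invariance of $\tilde P$ under diffeomorphisms fixing the origin, $\tilde P_r$ is determined by its restriction to the fibre over $0$, which is a smooth $G^{r+1}$-equivariant map $J^r_0(\Lambda^{p_1}T^*\mathbb{R}^n)\times\cdots\times J^r_0(\Lambda^{p_k}T^*\mathbb{R}^n)\to\Lambda^qT^*_0\mathbb{R}^n$.

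Finally I would check that the assignment $P\mapsto\tilde P$ is injective: from $\tilde P$ one recovers the value of $P(\omega_1,\dots,\omega_k)$ at the origin through $P(\omega_1,\dots,\omega_k)_{x=0}=\tilde P(j^r_0\omega_1,\dots,j^r_0\omega_k)$, and then at an arbitrary point $x$ of an arbitrary $n$-manifold by composing with a local diffeomorphism carrying $x$ to $0\in\mathbb{R}^n$ and invoking naturalness; hence $P$ is uniquely determined by $\tilde P$. The only point requiring a little care is the identification of $J^r(E_1\oplus E_2)$ with the fibred product $J^rE_1\times_{\mathbb{R}^n} J^rE_2$ and the compatibility of this identification with the $G^{r+1}$-action, but this is a routine consequence of the definition of jets; every other step is literally the one already carried out for a single form, so no genuine obstacle arises here.
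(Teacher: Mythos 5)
Your proposal is correct and follows essentially the same route as the paper, which simply remarks that Propositions \ref{1.3} and \ref{1.4} ``trivially extend'' to the direct-sum setting; you have spelled out exactly that extension (restriction to $\mathbb{R}^n$, Peetre--Slov\'ak and the finite-order argument of Lemma \ref{1.2} applied to the Whitney sum $\Lambda^{p_1}T^*\mathbb{R}^n\oplus\cdots\oplus\Lambda^{p_k}T^*\mathbb{R}^n$, then restriction to the fibre over the origin and recovery of $P$ from $\tilde P$ by naturalness).
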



The relation between $\,P\,$ and $\,\tilde P\,$ is determined by the equality
$$P(\omega_1,\dots,\omega_k)_{x=0}\,=\,\tilde P(j^r_0\omega_1,\dots,j^r_0\omega_k)$$
for all $\,(\omega_1,\dots,\omega_k)\in\Omega^{p_1}(\mathbb{R}^n)\times\dots\times \Omega^{p_k}(\mathbb{R}^n)$.\medskip

\begin{remark}
Later on, we will compute those smooth, $G^{r+1}$-equi\-va\-riant maps, and it will result that the inclusions of \ref{1.4} and \ref{1.5} are in fact equalities (see \cite{KMS} for more general results of this kind).
\end{remark}

\section{Main result: first version}

In this section we still work in category $\,\textbf{Man}_n\,$ of $n$-dimensional manifolds and local diffeomorphisms between them.

Let us introduce the following notations for the exterior and symmetric powers: $\,\Lambda^p:=\Lambda^p(T^*_0\mathbb{R}^n)\,$ and $\, S^p:=S^p(T^*_0\mathbb{R}^n)$. We will also use that $\,S^p\,$ is canonically isomorphic to the vector space of homogeneous polynomials of degree $\,p\,$ over $\,T^*_0\mathbb{R}^n$.

There exists a diffeomorphism:
$$\begin{CD}
J^r_0(\Lambda^pT^*\mathbb{R}^n) @= (\Lambda^p)\oplus(S^1\otimes \Lambda^p)\oplus\cdots\oplus(S^r\otimes \Lambda^p)\\
j^r_0\omega & \longmapsto & (\omega^0,\omega^1,\dots\dots ,\omega^r)\qquad\qquad 
\end{CD}$$
where $\,\omega^s\,$ is the homogeneous component of degree $\,s\,$ in the Taylor expansion (on cartesian coordinates) of the $p$-form $\,\omega$. If we write $\,\nabla\,$ for the standard flat connection on $\,\mathbb{R}^n$, then $\,\omega^s=(\nabla^s\omega)_{x=0}$.

This diffeomorphism is not invariant under arbitrary changes of coordinates, but it is so under linear changes of coordinates. In other words, this diffeomorphism is equivariant with respect to the action of the linear group $\,Gl_n \subseteq G^{r+1}$, although it is not equivariant with respect to the whole group $\,G^{r+1}$. 

Therefore, Proposition \ref{1.5} implies the following:

\begin{lema}\label{2.1} There exists an injective map:
$$\begin{CD}
\left\{\begin{aligned}\mathrm{Regular\ and \ natural \ morphisms \ of \ order \ }r\\
P\colon\Omega^{p_1}\oplus\cdots\oplus\Omega^{p_k}\,\longrightarrow\,\Omega^q\qquad\qquad\end{aligned}\right\}\\
|\bigcap^{\phantom{\bigcap}}
_{\phantom{\bigcap}} \\
 \left\{\begin{aligned} &\mathrm{Smooth\ }Gl_n\text{--}\,\mathrm{equivariant \ maps \ }\quad\\ \tilde P\colon\prod_{i=1,\dots,k} &\left[(S^0\otimes \Lambda^{p_i})\oplus\cdots\oplus(S^r\otimes \Lambda^{p_i})\right]\,\longrightarrow\, \Lambda^q\end{aligned}\right\}\end{CD}$$
 \end{lema}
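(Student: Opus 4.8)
The plan is to compose the injection already furnished by Proposition \ref{1.5} with the change of variables provided by the Taylor-expansion diffeomorphism displayed just above the statement. Proposition \ref{1.5} assigns, to each regular and natural morphism $\,P\,$ of order $\,r$, a smooth $\,G^{r+1}$-equivariant map
$$\tilde P\colon J^r_0(\Lambda^{p_1}T^*\mathbb{R}^n)\times\cdots\times J^r_0(\Lambda^{p_k}T^*\mathbb{R}^n)\,\longrightarrow\,\Lambda^qT^*_0\mathbb{R}^n\ ,$$
determined by $\,P(\omega_1,\dots,\omega_k)_{x=0}=\tilde P(j^r_0\omega_1,\dots,j^r_0\omega_k)$, and this assignment $\,P\mapsto\tilde P\,$ is injective.

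First I would note that, since $\,Gl_n\subseteq G^{r+1}$, every $\,G^{r+1}$-equivariant map is in particular $\,Gl_n$-equivariant. Next I would invoke the diffeomorphism
$$J^r_0(\Lambda^{p_i}T^*\mathbb{R}^n)\;=\;(\Lambda^{p_i})\oplus(S^1\otimes\Lambda^{p_i})\oplus\cdots\oplus(S^r\otimes\Lambda^{p_i})\ ,\qquad j^r_0\omega\longmapsto(\omega^0,\dots,\omega^r)\ ,$$
(reading $\,S^0\otimes\Lambda^{p_i}=\Lambda^{p_i}$), which, as recalled in the discussion preceding the lemma, is not equivariant for arbitrary changes of coordinates but is equivariant for the linear ones, i.e. for the subgroup $\,Gl_n$: a linear change does not mix the homogeneous Taylor components $\,\omega^s$, and acts on the $\,s$-th one through the natural representation of $\,Gl_n\,$ on $\,S^s\otimes\Lambda^{p_i}$. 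Transporting $\,\tilde P\,$ along the product of these (inverse) diffeomorphisms turns it into a map
$$\tilde P\colon\prod_{i=1,\dots,k}\left[(S^0\otimes\Lambda^{p_i})\oplus\cdots\oplus(S^r\otimes\Lambda^{p_i})\right]\,\longrightarrow\,\Lambda^q\ ,$$
which is still smooth, since the transporting maps are diffeomorphisms, and still $\,Gl_n$-equivariant, since it is a composition of $\,Gl_n$-equivariant maps.

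Finally I would check injectivity of the resulting assignment $\,P\mapsto\tilde P\,$ on the reparametrised domain. This is immediate: the underlying set map of Proposition \ref{1.5} is untouched; we have only pre-composed each $\,\tilde P\,$ with a fixed bijection and weakened the ambient equivariance requirement from $\,G^{r+1}\,$ to its subgroup $\,Gl_n$, and neither step can merge two distinct morphisms. (The price of this weakening is that the map need no longer be surjective, which is exactly why the statement asserts only an inclusion; equality will be recovered once the $\,G^{r+1}$-equivariant maps are computed explicitly.) There is, in truth, no real obstacle in this argument: all the substance sits in Proposition \ref{1.5} and in the $\,Gl_n$-equivariance of the Taylor splitting, both already available.
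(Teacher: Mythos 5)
Your proposal is correct and follows exactly the route the paper intends: Lemma \ref{2.1} is stated there as an immediate consequence of Proposition \ref{1.5} combined with the $Gl_n$-equivariance of the Taylor-expansion splitting of $J^r_0(\Lambda^{p_i}T^*\mathbb{R}^n)$, which is precisely the composition you carry out. Your added remarks on why injectivity survives the reparametrisation and the restriction from $G^{r+1}$ to $Gl_n$ merely make explicit what the paper leaves implicit.
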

 
The relation between $\,P\,$ and $\,\tilde P\,$ is determined by the equality
\begin{align*}
P(\omega_1,\dots,\omega_k)_0\, &=\,\tilde P(\dots,\left[\omega_i^0,\dots,\omega_i^r\right],\dots) \\
&=\, \tilde P(\dots,\left[(\nabla^0\omega_i)_0,\dots,(\nabla^r\omega_i)_0\right],\dots)\ .
\end{align*} 

\medskip

Again, this inclusion will be proved to be an equality, once we compute the $Gl_n$-equivariant maps. The first step in this computation is to prove that these maps satisfy certain homogeneity condition.

\begin{lema}\label{2.2} Any smooth, $\,Gl_n$-equivariant map
$$\begin{CD}\prod_{i=1}^k\left[(S^0\otimes \Lambda^{p_i})\oplus\cdots\oplus(S^r\otimes \Lambda^{p_i})\right] @>{\tilde P}>> \Lambda^q
\end{CD}$$
satisfies the following homogeneity condition
\begin{equation}\label{CondicionHomog}
\tilde P\left(\dots,(\lambda^{p_i+0}\omega_i^0,\dots,\lambda^{p_i+r}\omega_i^r),\dots\right)\,=\,\lambda^q\tilde P\left(\dots,(\omega_i^0,\dots,\omega_i^r),\dots\right)
\end{equation}
for any real number $\, \lambda\neq 0$.
\end{lema}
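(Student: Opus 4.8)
The plan is to deduce \eqref{CondicionHomog} directly from the $Gl_n$-equivariance of $\tilde P$, applied to the homotheties $\lambda\cdot\mathrm{id}\in Gl_n$ with $\lambda\neq0$. In particular, no use of the smoothness of $\tilde P$ will be required: equivariance under scalar matrices alone already forces the stated identity.

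First I would fix $\lambda\neq0$ and describe the action of the element $\lambda\cdot\mathrm{id}\in Gl_n$ — equivalently, of the $1$-jet at the origin of the homothety $\tau_\lambda(x)=\lambda x$ — on both the source and the target of $\tilde P$. Since $Gl_n$ acts on $T^*_0\mathbb{R}^n$ through the contragredient representation, the scalar matrix $\lambda\cdot\mathrm{id}$ acts on $T^*_0\mathbb{R}^n$ as multiplication by $\lambda^{-1}$; hence it acts on $S^s=S^s(T^*_0\mathbb{R}^n)$ as multiplication by $\lambda^{-s}$, on $\Lambda^{p_i}=\Lambda^{p_i}(T^*_0\mathbb{R}^n)$ as multiplication by $\lambda^{-p_i}$, and therefore on the summand $S^s\otimes\Lambda^{p_i}$ of the source as multiplication by $\lambda^{-(s+p_i)}$; likewise it acts on the target $\Lambda^q=\Lambda^q(T^*_0\mathbb{R}^n)$ as multiplication by $\lambda^{-q}$. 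One may also check this by hand through the identification preceding Lemma \ref{2.1}: if $\omega_i=\sum_I f_I\,dx^I$, then $(\tau_\lambda)_*\omega_i=\tau_{\lambda^{-1}}^*\omega_i=\sum_I \lambda^{-p_i}f_I(\lambda^{-1}x)\,dx^I$, whose degree-$s$ Taylor component is $\lambda^{-(s+p_i)}\omega_i^s$, and similarly on $q$-covectors at the origin.

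Then it only remains to write out the equivariance relation $\tilde P\big((\lambda\cdot\mathrm{id})\cdot v\big)=(\lambda\cdot\mathrm{id})\cdot\tilde P(v)$ for the vector $v=(\dots,(\omega_i^0,\dots,\omega_i^r),\dots)$. By the previous paragraph this reads
$$\tilde P\big(\dots,(\lambda^{-p_i}\omega_i^0,\dots,\lambda^{-(p_i+r)}\omega_i^r),\dots\big)\,=\,\lambda^{-q}\,\tilde P\big(\dots,(\omega_i^0,\dots,\omega_i^r),\dots\big)\ ,$$
and, since $\lambda$ ranges over all nonzero reals, replacing $\lambda$ by $\lambda^{-1}$ yields exactly \eqref{CondicionHomog}.

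The argument presents no genuine obstacle: the only thing to be careful about is the bookkeeping of weights — keeping the contragredient (inverse–transpose) action of $Gl_n$ on $T^*_0\mathbb{R}^n$ straight so that all exponents carry the correct sign — together with the fact, already recorded before Lemma \ref{2.1}, that the decomposition of $J^r_0(\Lambda^{p_i}T^*\mathbb{R}^n)$ into the summands $S^s\otimes\Lambda^{p_i}$ is $Gl_n$-equivariant.
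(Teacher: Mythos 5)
Your proposal is correct and follows essentially the same route as the paper: both deduce the identity purely from equivariance under the scalar homotheties, with no use of smoothness. The only cosmetic difference is that the paper works directly with the pullback $\tau_\lambda^*$ (which multiplies a covariant tensor of order $m$ by $\lambda^m$), whereas you use the contragredient action of $\lambda\cdot\mathrm{id}$ and then substitute $\lambda\mapsto\lambda^{-1}$; the weight bookkeeping in both versions agrees.
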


\proof Let $\,\tau_\lambda\colon\mathbb{R}^n\to\mathbb{R}^n$, $\tau_\lambda(x)=\lambda x$, be the homothety of ratio $\,\lambda\neq 0$. On the one hand, for any tensor $\,\omega_i^j\in S^j\otimes\Lambda^{p_i}$, covariant of order $\,p_i+j$, it holds $\,\tau_\lambda^*\omega_i^j=\lambda^{p_i+j}\omega_i^j$. 

On the other, as the map $\, P\,$ is $Gl_n$-equivariant, it holds $\,\tilde P\circ\tau_\lambda^*=\tau_\lambda^*\circ\tilde P$. Consequently:
\begin{align*}
\tilde P & \left(\dots,(\lambda^{p_i+0}\omega_i^0,\dots,\lambda^{p_i+r}\omega_i^r),\dots\right)\, =\,
\tilde P\left(\dots,(\tau_\lambda^*\omega_i^0,\dots,\tau_\lambda^*\omega_i^r),\dots\right) \\
& =\,\tau_\lambda^*\tilde P\left(\dots,(\omega_i^0,\dots,\omega_i^r),\dots\right)  \,=\,\lambda^q\tilde P\left(\dots,(\omega_i^0,\dots,\omega_i^r),\dots\right)\ .
\end{align*}

\qed

\medskip
A smooth map between vector spaces satisfying a homogeneity condition has necessarily to be a polynomial, in virtue of the following elementary result  (e. g. \cite{KMS}, Thm. 24.1):

\medskip
\noindent {\bf Homogeneous Function Theorem:} {\it Let $E_1,\dots, E_k$ be finite dimensional $\mathbb{R}$-vector spaces.

Let $\, f \, \colon  \prod E_i \to \mathbb{R}$ be a smooth function such that there exist positive real numbers $a_i > 0$, and $ w \in \mathbb{R}$ satisfying:
\begin{equation}\label{CondicionHomogeneidadLemma}
 f ( \lambda^{a_1} e_1 , \ldots , \lambda^{a_k} e_k) = \lambda^w \, f(e_1 , \ldots , e_k)
\end{equation}
 for any positive real number $\lambda >0$ and any
$(e_1 , \ldots , e_k) \in \prod E_i$.

Then $f$ is a sum of monomials of 
degree $(d_1,\dots,d_k)$ in the variables $e_1,\dots,e_k$ satisfying the relation
\begin{equation}\label{CondicionMonomios}
 a_1 d_1 + \cdots + a_k d_k = w \ .
\end{equation}
 
If there are no natural numbers  $d_1,\dots,d_k \in \mathbb{N} \cup \{ 0 \}$ satisfying this equation, then $f$ is the zero map. }
\medskip

In other words, for any finite dimensional vector space $W$, there exists an $\mathbb{R}$-linear isomorphism:
$$\begin{CD}
\left[ \text{Smooth maps }\, f \colon  \prod E_i \to W\,\text{ satisfying }  (\ref{CondicionHomogeneidadLemma})\right]  \\
@| \\
\bigoplus \limits _{(d_1 , \ldots , d_k)}\text{Hom}_{\mathbb{R}}(S^{d_1} E_1 \otimes \ldots 
\otimes S^{d_k} E_k,\, W)
\end{CD}$$
where $(d_1, \ldots , d_k)$ runs over the non-negative integers solutions of equation (\ref{CondicionMonomios}). 

A smooth map $\,f\colon\prod E_i\to W$, satisfying (\ref{CondicionHomogeneidadLemma}), and the corresponding linear map $\,\oplus f^{d_1\dots d_k}\in \bigoplus_{(d_1 , \ldots , d_k)}\text{Hom}_{\mathbb{R}}(S^{d_1} E_1 \otimes \ldots 
\otimes S^{d_k} E_k,\, W)$, are related by the equality
$$f(e_1,\dots,e_k)\,=\,\sum_{(d_1 , \ldots , d_k)}f^{d_1\dots d_k}\left((e_1\otimes\overset{d_1}{\dots}\otimes e_1)  \otimes\cdots\cdots\otimes (e_k\otimes\overset{d_k}{\dots}\otimes e_k) \right)\ .$$

\medskip

\begin{definition} Let us fix a finite sequence of positive integers $\,(p_1,\dots,p_k)$. Let us denote $\,\mathbb{R}\{ u_1,\dots,u_k\}\,$ the {\it anti-commutative} algebra of polynomials with real coefficients in the variables $\,u_1,\dots, u_k$, where each variable $\,u_i\,$ is assigned degree $\,p_i$. The anti-commutative character of this algebra is expressed by the relations
$$u_iu_j\,=\, (-1)^{p_ip_j}u_ju_i\ .$$

The degree of a monomial $\,u_1^{a_1}\dots u_k^{a_k}\,$ is defined as $\,\sum a_ip_i$. A polynomial $\,\mathbf{P}(u_1,\dots, u_k)\in \mathbb{R}\{ u_1,\dots,u_k\}\,$  is said homogeneous of degree $\,q\,$ if it is a linear combination of monomials of degree $\,q$.
\end{definition}

Let $\,\mathbf{P}(x_1,\dots, x_k)\in \mathbb{R}\{ u_1,\dots,u_k\}\,$ be a homogeneous polynomial of degree $\,q$, and let $\,\omega_1,\dots,\omega_k\,$ differential forms of degree $\,p_1,\dots,p_k\,$ on a smooth manifold $\,X$. Then $\,\mathbf{P}(\omega_1,\dots,\omega_k)$, where the product of variables is replaced by the exterior product of forms, is a differential form of degree $\,q\,$ on $\,X$.

\begin{teorema}\label{2.3} Let $\,p_1,\dots,p_k\,$ be positive integers,  and let $\,\mathbb{R}\{ u_1,v_1,\dots,u_k,v_k\}\,$ be the anti-commutative algebra of polynomials on the variables $\,u_1,v_1,\dots,u_k,v_k$, of degree $\,\mathrm{deg}\, u_i=p_i$, $\,\mathrm{deg}\, v_i=p_i+1$.

Any regular and natural morphism over $\mathbf{Man}_n$
$$ P\colon \Omega^{p_1} \oplus \ldots \oplus \Omega^{p_k} \  \longrightarrow \ \Omega^q\qquad (0\leq q\leq n)  $$
can be written as
$$P(\omega_1,\dots,\omega_r)\,=\, \mathbf{P}(\omega_1,\mathrm{d}\omega_1,\dots,\omega_k,\mathrm{d}\omega_k)$$
for a unique homogeneous polynomial $\,\mathbf{P}(u_1,v_1,\dots,u_k,v_k)\in\mathbb{R}\{ u_1,v_1,\dots,u_k,v_k\}\,$ of degree $\,q$.
\end{teorema}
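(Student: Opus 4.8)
The strategy is to reduce Theorem \ref{2.3} to the invariant theory of the linear group $Gl_n$ by a sequence of successive reductions, exploiting Lemmas \ref{2.1} and \ref{2.2} and the Homogeneous Function Theorem. By Lemma \ref{2.1}, a regular natural morphism $P$ of order $r$ corresponds to a smooth $Gl_n$-equivariant map
$$
\tilde P\colon \prod_{i=1}^k\Big[(S^0\otimes\Lambda^{p_i})\oplus\cdots\oplus(S^r\otimes\Lambda^{p_i})\Big]\longrightarrow\Lambda^q\ ,
$$
and by Lemma \ref{2.2} this map satisfies the homogeneity condition \eqref{CondicionHomog} with weights $a$ attached to each slot $\omega_i^s$ equal to $p_i+s$ and target weight $q$. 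The Homogeneous Function Theorem then forces $\tilde P$ to be a polynomial: a finite sum of $Gl_n$-equivariant linear maps out of tensor products $\bigotimes_{i,s}S^{d_{i,s}}(S^s\otimes\Lambda^{p_i})$ into $\Lambda^q$, where the multi-index $(d_{i,s})$ ranges over the non-negative solutions of $\sum_{i,s}(p_i+s)\,d_{i,s}=q$. Since each factor $S^s\otimes\Lambda^{p_i}$ is a $Gl_n$-submodule of a tensor power $\otimes^{p_i+s}V^*$ of the cotangent space, every such domain is a $Gl_n$-submodule of a big tensor power $\otimes^q V^*$, and the codomain $\Lambda^q$ is a quotient of $\otimes^q V^*$.

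The heart of the argument is then to identify these equivariant linear maps. Here I would invoke Corollary \ref{0.3} (equivalently the Main Theorem of $Gl_n$-invariant theory): the only $Gl_n$-equivariant linear maps from a submodule $E\subseteq\otimes^q V^*$ to $\Lambda^q V^*$ are multiples of the skew-symmetrisation operator $h$, restricted to $E$. Applying this to each summand, one concludes that $\tilde P$, evaluated on a tuple of jets, is a linear combination of expressions obtained by taking a product of factors each of the form $\nabla^{s}\omega_i$ (or rather their values at $0$), tensoring them together according to the chosen multi-index, and applying $h$. Using the multiplicativity $h(T\otimes T')=h(T)\wedge h(T')$ from Remark \ref{0.4}, each such term factors as a wedge product $h(\nabla^{s_1}\omega_{i_1})\wedge\cdots\wedge h(\nabla^{s_m}\omega_{i_m})$, up to scalar. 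The crucial point is now that $h$ annihilates every "interior" covariant slot: a short argument shows $h(\nabla^s\omega)=0$ for $s\geq 2$, because $\nabla^s\omega$ is symmetric in the first two derivative indices (as $\nabla$ is flat and torsion-free), while $h$ skew-symmetrises over all indices, so any tensor that is symmetric in two of its entries is killed. Also $h(\nabla\omega)$ is (up to the factor $q!$) precisely $\D\omega$, again by Remark \ref{0.4}, and $h(\omega)=p!\,\omega$. Hence only the factors $\omega_i$ (the $0$-jet part) and $\D\omega_i$ (the degree-$1$ part of the jet) survive, and $\tilde P$ becomes a universal polynomial in the $\omega_i$ and $\D\omega_i$ with wedge product.

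This shows that $P(\omega_1,\dots,\omega_k)$ agrees at the origin of $\mathbb{R}^n$ with $\mathbf P(\omega_1,\D\omega_1,\dots,\omega_k,\D\omega_k)$ for a homogeneous polynomial $\mathbf P\in\mathbb{R}\{u_1,v_1,\dots,u_k,v_k\}$ of degree $q$, where $\deg u_i=p_i$, $\deg v_i=p_i+1$; the anti-commutativity of the algebra $\mathbb{R}\{\cdot\}$ matches exactly the graded-commutativity of the exterior product of forms of those degrees, so the polynomial is well defined. By naturalness and translation-invariance the identity at the origin propagates to all points and, since every $n$-manifold is locally diffeomorphic to $\mathbb{R}^n$ and forms are determined locally, to all manifolds $X$. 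Uniqueness of $\mathbf P$ follows because distinct homogeneous polynomials in $\mathbb{R}\{u_i,v_i\}$ already give distinct operations when tested on suitable forms on $\mathbb{R}^n$ (e.g. forms with prescribed jets realising algebraically independent data), so the correspondence is injective.

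\textbf{Main obstacle.} The routine-looking but genuinely delicate step is the passage from "arbitrary $Gl_n$-equivariant polynomial map between these jet modules" to "wedge polynomial in $\omega_i$ and $\D\omega_i$ only". Two things must be handled carefully: first, that decomposing the domain as a submodule of $\otimes^q V^*$ and applying Corollary \ref{0.3} really does capture all equivariant maps (one must be sure the target $\Lambda^q$ is reached only via skew-symmetrisation, with no extra maps appearing from the symmetric factors $S^{d_{i,s}}(\cdots)$ — this is exactly what Corollary \ref{0.3} guarantees, but one has to set up the identifications so the corollary applies); and second, the vanishing $h(\nabla^s\omega)=0$ for $s\geq2$, which is what collapses the infinitely many jet variables down to just $\omega_i$ and $\D\omega_i$. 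Everything else — the bookkeeping of degrees in the Homogeneous Function Theorem, the translation from "equality at $0$" to "equality everywhere", and uniqueness — is straightforward once this core computation is in place.
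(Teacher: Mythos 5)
Your proposal follows essentially the same route as the paper's own proof: reduction via Lemmas \ref{2.1} and \ref{2.2} and the Homogeneous Function Theorem to a sum of $Gl_n$-equivariant linear maps, identification of each as a multiple of skew-symmetrisation via Corollary \ref{0.3}, the observation that $h$ annihilates $\nabla^s\omega$ for $s\ge 2$ so that only $\omega_i$ and $\mathrm{d}\omega_i$ survive, and propagation of the identity from the origin by naturalness. The only step you leave as a bare assertion is uniqueness, which the paper isolates as Lemma \ref{2.4} and proves by exhibiting explicit test forms on $\mathbb{R}^q$ --- precisely the ``suitable forms realising algebraically independent data'' you allude to.
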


\proof Via Lemma \ref{2.1}, a regular natural morphism $\,P\colon \Omega^{p_1} \oplus \ldots \oplus \Omega^{p_k}\longrightarrow\Omega^q$, of order $r$, corresponds to a $Gl_n$-equivariant smooth map
$$\tilde P\colon \prod_{i=1,\dots,k}\left[(S^0\otimes \Lambda^{p_i})\oplus\cdots\oplus(S^r\otimes \Lambda^{p_i})\right]\,\longrightarrow\, \Lambda^q\ ,$$
satisfying the homogeneity condition (\ref{CondicionHomog}).

Due to the Homogeneous Function Theorem,  such an $\,\tilde P\,$ has to be a polynomial; that is to say, it is a sum $\,\oplus_{\{d_{i,j}\}}\, \tilde P^{\{d_{i,j}\}}\,$ of $Gl_n$-equivariant linear maps:
$$ \tilde P^{\{ d_{i,j}\}}: \bigotimes_{i=1,\dots,k}\left[S^{d_{i,0}} (S^0\otimes\Lambda^{p_i}) \otimes\cdots\otimes S^{d_{i,r}} (S^r\otimes\Lambda^{p_i})\right]\, \longrightarrow \, \Lambda^{q} $$ where each sequence $\,\{ d_{i,j}\}\,$ of non-negative integers is a solution to the equation:
\begin{equation}\label{Soluciones}
\left[p_1d_{1,0}+\cdots+(p_1+r)d_{1,r}\right]+  \cdots + \left[p_kd_{k,0}+\cdots+(p_k+r)d_{k,r}\right] \, = \, q \ .
\end{equation}

Observe that this condition implies that $\,\tilde P^{\{ d_{i,j}\}}\,$ is in fact defined on a vector subspace of $\,\otimes^qT_0^*\mathbb{R}^n$.
Then, by Proposition \ref{0.3}, the linear map $\,\tilde P^{\{ d_{i,j}\}}\,$ is a multiple of the skew-symmetrisation operator $\,h$, 
$$\tilde P^{\{ d_{i,j}\}}\,=\, \lambda^{\{ d_{i,j}\}}\cdot h\ .$$

The skew-symmetrisation of two symmetric indices vanishes, so that we may assume, from now on, that
$$
d_{i,2} =d_{i,3} =\ldots =d_{i,r}= 0 \quad , \quad \mbox{ for all } i = 1 , \ldots , k.$$

That is to say, we only consider solutions $\,\{ d_{1,0},d_{1,1},\dots,d_{k,0},d_{k,1}\}\,$ to the equation:
\begin{equation}\label{SolucionesII}
\left[p_1d_{1,0}+(p_1+1)d_{1,1}\right]+  \cdots + \left[p_kd_{k,0}+(p_k+1)d_{k,1}\right] \, = \, q \ .
\end{equation}

Bringing all this together,
$$P(\omega_1,\dots,\omega_k)_{x=0}\,\overset{2.1}{=}\,\tilde P\left((\omega_1)_0,(\nabla\omega_1)_0,\dots,(\omega_k)_0,(\nabla\omega_k)_0\right)
$$
$$=\,\sum_{\{ d_{i,j}\}}\tilde P^{\{ d_{i,j}\}}\left(\left[(\omega_1)_0\otimes\overset{d_{1,0}}{\dots}\otimes(\omega_1)_0\right]\otimes\left[(\nabla\omega_1)_0\otimes\overset{d_{1,1}}{\dots}\otimes(\nabla\omega_1)_0\right]\otimes\dots\right.$$
$$ \left. \dots\otimes
\left[(\omega_k)_0\otimes\overset{d_{k,0}}{\dots}\otimes(\omega_k)_0\right]\otimes\left[(\nabla\omega_k)_0\otimes\overset{d_{k,1}}{\dots}\otimes(\nabla\omega_1)_0\right]\right)$$
$$=\,\sum_{\{ d_{i,j}\}}\lambda^{\{ d_{i,j}\}}h\left(\left[(\omega_1)_0\otimes\overset{d_{1,0}}{\dots}\otimes(\omega_1)_0\right]\otimes\left[(\nabla\omega_1)_0\otimes\overset{d_{1,1}}{\dots}\otimes(\nabla\omega_1)_0\right]\otimes\dots\right.$$
$$ \left. \dots\otimes
\left[(\omega_k)_0\otimes\overset{d_{k,0}}{\dots}\otimes(\omega_k)_0\right]\otimes\left[(\nabla\omega_k)_0\otimes\overset{d_{k,1}}{\dots}\otimes(\nabla\omega_1)_0\right]\right)$$
(using Properties \ref{0.4} of the skew-symmetrisation operator)
$$=\,\sum_{\{ d_{i,j}\}}\mu^{\{ d_{i,j}\}}\left(\left[(\omega_1)_0\wedge\overset{d_{1,0}}{\dots}\wedge(\omega_1)_0\right]\wedge\left[(\D\omega_1)_0\wedge\overset{d_{1,1}}{\dots}\wedge(\D\omega_1)_0\right]\wedge\dots\right.$$
$$ \left. \dots\wedge
\left[(\omega_k)_0\wedge\overset{d_{k,0}}{\dots}\wedge(\omega_k)_0\right]\wedge\left[(\D\omega_k)_0\wedge\overset{d_{k,1}}{\dots}\wedge(\D\omega_1)_0\right]\right)$$
where $\mu^{\{ d_{i,j}\}}:=\lambda^{\{ d_{i,j}\}}(p_1!)^{d_{1,0}+d_{1,1}}\cdots(p_k!)^{d_{k,0}+d_{k,1}}$.

Therefore, taking
$$\mathbf{P}(u_1,v_1,\dots,u_k,v_k):=\,\sum_{\{ d_{i,j}\}}\mu^{\{ d_{i,j}\}}u_1^{d_{1,0}}v_1^{d_{1,1}}\cdots u_k^{d_{k,0}}v_k^{d_{k,1}}$$
it follows 
$$P(\omega_1,\dots,\omega_r)_{x=0}\,=\, \mathbf{P}(\omega_1,\mathrm{d}\omega_1,\dots,\omega_k,\mathrm{d}\omega_k)_{x=0}\ .$$

By naturalness, we conclude
\begin{equation}\label{CondUnicidad}
P(\omega_1,\dots,\omega_k)\,=\, \mathbf{P}(\omega_1,\mathrm{d}\omega_1,\dots,\omega_k,\mathrm{d}\omega_k) 
\end{equation}
for any $n$-manifold $\,X\,$ and any $\,(\omega_1,\dots,\omega_k)\in \Omega^{p_1}(X)\oplus\dots\oplus\Omega^{p_k}(X)$.

Finally, the uniqueness of the polynomial $\,\mathbf{P}\,$ is proved applying Lemma \ref{2.4} below to the difference of two polynomials satisfying (\ref{CondUnicidad}).

\qed

\begin{lema}\label{2.4}
Let $\,\mathbb{R}\{ u_1,v_1,\dots,u_k,v_k\}\,$ be as in Theorem \ref{2.3}.

Let $\,\mathbf{P}\in\mathbb{R}\{ u_1,v_1,\dots,u_k,v_k\}\,$ be a non-zero homogeneous polynomial of degree $\,q\leq n$. Then, the morphism of sheaves over $\,X=\mathbb{R}^n$
$$ P\colon \Omega^{p_1}_X \oplus \ldots \oplus \Omega^{p_k}_X \  \longrightarrow \ \Omega^q_X\quad ,\qquad P(\omega_1,\dots,\omega_k)=\mathbf{P}(\omega_1,\D\omega_1,\dots,\omega_k,\D\omega_k)  $$
is not identically zero.
\end{lema}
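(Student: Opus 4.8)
The plan is to exhibit an explicit collection of forms on which $P$ does not vanish. Since $\mathbf{P}\neq0$, I would first fix a monomial $m_0=u_1^{a_1}v_1^{b_1}\cdots u_k^{a_k}v_k^{b_k}$ occurring in $\mathbf{P}$ with nonzero coefficient $c_0$; as its degree is $q$ we have $q=\sum_i(a_ip_i+b_i(p_i+1))$. Setting $n_i:=a_ip_i+b_i(p_i+1)$, so that $\sum_i n_i=q\leq n$, I would partition $\{1,\dots,q\}=B_1\sqcup\cdots\sqcup B_k$ with $|B_i|=n_i$ (the coordinates $x_{q+1},\dots,x_n$ being irrelevant), and look for a $p_i$-form $\omega_i$ on $\mathbb{R}^n$ built only from the coordinates indexed by $B_i$ and satisfying
$$\omega_i^{\wedge a_i}\wedge(\D\omega_i)^{\wedge b_i}\,=\,a_i!\,b_i!\,\bigwedge_{j\in B_i}dx_j\,.$$

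For the construction of $\omega_i$ I would use that, since $u_i$ and $v_i$ anti-commute and have degrees $p_i$ and $p_i+1$, in any monomial occurring in a nonzero polynomial one has $a_i\leq1$ if $p_i$ is odd and $b_i\leq1$ if $p_i$ is even; in particular $a_i$ and $b_i$ are never both $\geq2$. Splitting $B_i$ into $a_i$ sub-blocks $A_1,\dots,A_{a_i}$ of size $p_i$ and $b_i$ sub-blocks of size $p_i+1$, and singling out one index $s_m$ in each of the latter (leaving a size-$p_i$ set $C_m$), I would take
$$\omega_i\,:=\,\sum_{l=1}^{a_i}dx_{A_l}\,+\,\sum_{m=1}^{b_i}x_{s_m}\,dx_{C_m}\,,$$
where $dx_A$ denotes the wedge of the $dx_j$, $j\in A$, so that $\D\omega_i=\sum_m dx_{s_m}\wedge dx_{C_m}$. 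The displayed identity should then follow from a short computation that uses the pairwise disjointness of the supports of these summands, the vanishing of any wedge containing a repeated $1$-form, and the behaviour of $p_i$-forms under permutation (commuting when $p_i$ is even, squaring to zero when $p_i$ is odd). The cases $a_i=0$ or $b_i=0$ are covered by the conventions $0!=1$ and the empty wedge understood as $1$.

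Finally, I would evaluate $\mathbf{P}$ on this collection $(\omega_1,\dots,\omega_k)$ and argue that only $m_0$ contributes. A monomial $m'=u_1^{a'_1}v_1^{b'_1}\cdots u_k^{a'_k}v_k^{b'_k}$ occurring in $\mathbf{P}$ contributes $\bigwedge_i\omega_i^{\wedge a'_i}\wedge(\D\omega_i)^{\wedge b'_i}$; since $\omega_i$ involves only the $n_i$ coordinates of $B_i$, the $i$-th factor vanishes unless $a'_ip_i+b'_i(p_i+1)\leq n_i$, and summing these inequalities against $\sum_i(a'_ip_i+b'_i(p_i+1))=q=\sum_i n_i$ forces equality $a'_ip_i+b'_i(p_i+1)=n_i$ for every $i$. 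The key point, which I regard as the crux, is the elementary observation that within the admissible range ($a'_i\leq1$ for $p_i$ odd, $b'_i\leq1$ for $p_i$ even) the equation $a'p_i+b'(p_i+1)=n_i$ has a unique solution: reducing modulo $p_i$ pins down $b'_i$, and reducing modulo $p_i+1$ pins down $a'_i$. Hence $(a'_i,b'_i)=(a_i,b_i)$ for all $i$, so $m'=m_0$, and therefore $P(\omega_1,\dots,\omega_k)=c_0\,\bigl(\prod_i a_i!\,b_i!\bigr)\,dx_1\wedge\cdots\wedge dx_q\neq0$. The hypothesis $q\leq n$ enters only to make room for the disjoint blocks $B_i$ inside $\{1,\dots,n\}$, which is to be expected since the statement plainly fails for $q>n$.

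The main obstacles are thus the two points just flagged: realizing a prescribed monomial by an explicit form (which rests on the fact that $a_i$ and $b_i$ are never both $\geq2$, and otherwise amounts to building suitable sums of decomposable and "contact-type" pieces on disjoint coordinate blocks), and the separation argument whose heart is the uniqueness of the solution of $a'p_i+b'(p_i+1)=n_i$ in the admissible range.
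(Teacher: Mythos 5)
Your proof is correct and follows essentially the same route as the paper's: you realize a fixed monomial of $\mathbf{P}$ with nonzero coefficient by explicit forms built from decomposable pieces $dx_A$ and contact-type pieces $x_{s}\,dx_{C}$ on disjoint coordinate blocks, which is exactly the paper's four-case construction (cases $u^0v^s$, $u^1v^s$, $u^sv^0$, $u^sv^1$) written uniformly. The only difference is that you spell out the separation step --- degree counting on disjoint blocks forcing $a'_ip_i+b'_i(p_i+1)=n_i$, plus uniqueness of $(a'_i,b'_i)$ in the admissible range --- which the paper leaves as ``easy to check''.
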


\proof Let us first consider the case $\,k=1$, that is, $\mathbf{P}\in\mathbb{R}\{u,v\}$.

In this case, $\,\mathbf{P}\,$ is (up to a scalar factor) a monomial of one of the following four types, depending on the parities of $\,p_1\,$ and $\,q$,
$$\left\{\begin{aligned}
&\ u^0v^s\qquad (p_1\text{ odd, }q \text{ even}) \\
&\ u^1v^s\qquad (p_1\text{ odd, }q\text{ odd}) \\
&\ u^sv^0\qquad (p_1\text{ even, }q \text{ even}) \\
&\ u^sv^1\qquad (p_1\text{ even, }q\text{ odd})\end{aligned}\right.$$
where $\,s\,$ has to be taken in each case, for the monomial to have degree $\,q$.

Depending on each of the four cases, let us consider the $\,p_1$-form
$$\omega_1\,=\left\{\begin{aligned}
\sum_{j=1}^sy_{j_0}\D y_{j_1}\wedge\dots\wedge\D y_{j_{p_1}} \quad , \quad
\text{case }u^0v^s\\
\D x_1\wedge\dots\wedge\D x_{p_1}+\sum_{j=1}^sy_{j_0}\D y_{j_1}\wedge\dots\wedge\D y_{j_{p_1}}\quad , \quad \text{case }u^1v^s\\
\sum_{j=1}^s\D y_{j_1}\wedge\dots\wedge\D y_{j_{p_1}} \quad , \quad
\text{case }u^sv^0\\
x_0\D x_1\wedge\dots\wedge\D x_{p_1}+\sum_{j=1}^s\D y_{j_1}\wedge\dots\wedge\D y_{j_{p_1}} \quad , \quad \text{case }u^sv^1
\end{aligned}\right.$$
on $\,X=\mathbb{R}^q$ (each formula employs $\,q\,$ variables). In each of the four cases, it holds
$$P(\omega_1)\,=\,\mathbf{P}(\omega_1,\D\omega_1)\,=\left\{\begin{aligned} 
(\omega_1)^0\wedge(\D\omega_1)^s\,=\, s! \,\omega_{\mathbb{R}^q} \\
(\omega_1)^1\wedge(\D\omega_1)^s\,=\, s! \,\omega_{\mathbb{R}^q} \\
(\omega_1)^s\wedge(\D\omega_1)^0\,=\, s! \,\omega_{\mathbb{R}^q} \\
(\omega_1)^s\wedge(\D\omega_1)^1\,=\, s! \,\omega_{\mathbb{R}^q}
\end{aligned}\right.$$
where $\,\omega_{\mathbb{R}^q}\,$ is the volume form of $\,\mathbb{R}^q$. Hence, the morphism $\,P\colon \Omega^{p_1}_X\longrightarrow \Omega^q_X\,$ is not identically zero for $\,X=\mathbb{R}^q$.

Let us now consider the case where $\,k\,$ is arbitrary. Let
$$\mathbf{P}\,=\,\sum\lambda_{a_1b_1\dots a_kb_k}\, u_1^{a_1}v_1^{b_1}\dots u_k^{a_k}v_k^{b_k}$$ be a homogeneous polynomial of degree $\,q$. Let us fix an index $\,a_1b_1\dots a_kb_k\,$ whose corresponding coefficient is non-zero. On each pair $\,a_ib_i\,$ one of the two terms is $0$ or $1$, so that we have again four possibilities $\,0s_i$, $\,1s_i$, $\,s_i0$ and $\,s_i1$. Let us now define on $\,\mathbb{R}^q\,$ a $p_i$-form $\,\omega_i\,$ using the four formulas above, depending on the case (but 
taking care of writing the different forms $\,\omega_1,\dots,\omega_k\,$ on $\,\mathbb{R}^q$ with disjoint variables).

It is now easy to check that:
$$P(\omega_1,\dots,\omega_k)\,=\,\mathbf{P}(\omega_1,\D\omega_1,\dots,\omega_k,\D\omega_k)\,=\,\lambda_{a_1b_1\dots a_kb_k}s_1!\cdots s_k!\cdot\omega_{\mathbb{R}^q}\ ,$$
so that $\,P\colon \Omega^{p_1}_X \oplus \ldots \oplus \Omega^{p_k}_X\longrightarrow \Omega^q_X\,$ is not identically zero when $\,X=\mathbb{R}^q$.
 
Finally, the same expressions for $\,\omega_1,\dots,\omega_k\,$ prove the thesis for $\,X=\mathbb{R}^n\,$ when $\,n\geq q$.

\qed

\medskip
A particular case of Theorem \ref{2.3} is the following characterisation of the exterior differential, that was first proved for $\mathbb{R}$-linear morphisms by Palais (\cite{Palais}, Thm. 10.5), and in the general case by Kol\'{a}\v{r}-Michor-Slov\'{a}k (\cite{KMS}, Prop. 25.4).

\begin{corolario} Let $\,p\,$ be a positive integer. Up to a constant factor, the only regular and natural morphism  $\,P\colon \Omega^p\to\Omega^{p+1}\,$ over $\,\mathbf{Man}_n\,$ is the exterior differential: $\,P(\omega)=\D\omega$.
\end{corolario}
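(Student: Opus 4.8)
The plan is to derive this as an immediate specialisation of Theorem \ref{2.3} to the case $k=1$ and $q=p+1$. First I would apply Theorem \ref{2.3} with the single form $\omega$ of degree $p_1 = p$ and target degree $q = p+1$: it furnishes a unique homogeneous polynomial $\mathbf{P}(u,v)\in\mathbb{R}\{u,v\}$ of degree $p+1$, in the anti-commutative variables of degree $\mathrm{deg}\,u = p$ and $\mathrm{deg}\,v = p+1$, such that $P(\omega) = \mathbf{P}(\omega,\D\omega)$ for every $n$-manifold $X$ and every $\omega\in\Omega^p(X)$.

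Next I would determine which monomials $u^a v^b$ can actually occur in $\mathbf{P}$. The degree constraint is $pa + (p+1)b = p+1$. The only non-negative integer solutions are $(a,b) = (0,1)$, which gives the monomial $v$, and — only when $p=1$ — the solution $(a,b) = (2,0)$, giving $u^2$. In the latter exceptional case one must still observe that $u^2$ vanishes in the anti-commutative algebra $\mathbb{R}\{u\}$ (or rather $\mathbb{R}\{u,v\}$): since $\mathrm{deg}\,u = 1$ is odd, the relation $uu = (-1)^{1\cdot 1}uu = -uu$ forces $u^2 = 0$. Hence in all cases $\mathbf{P}(u,v) = \lambda\, v$ for some $\lambda\in\mathbb{R}$, and therefore $P(\omega) = \mathbf{P}(\omega,\D\omega) = \lambda\,\D\omega$.

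I do not anticipate any real obstacle here; the corollary is purely a matter of solving one linear Diophantine equation and invoking the anti-commutativity convention already fixed in the definition preceding Theorem \ref{2.3}. The one subtlety worth stating explicitly — so the reader sees why the $p=1$ case does not produce an extra operation $\omega\mapsto\omega\wedge\omega$ — is precisely the vanishing $u^2 = 0$ for odd-degree $u$, equivalently the fact that $\omega\wedge\omega = 0$ for a $1$-form $\omega$. With that remark in place the statement follows directly from Theorem \ref{2.3}, and uniqueness of $\lambda$ is inherited from the uniqueness of $\mathbf{P}$ (equivalently from Lemma \ref{2.4}).
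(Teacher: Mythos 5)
Your proof is correct and takes essentially the same route as the paper: both specialise Theorem \ref{2.3} to $k=1$, $q=p+1$ and observe that the only homogeneous polynomial of degree $p+1$ in $\mathbb{R}\{u,v\}$ is a scalar multiple of $v$. Your explicit handling of the $p=1$ case, where $(a,b)=(2,0)$ also solves the degree equation but $u^2=0$ by anti-commutativity of an odd-degree variable, is a detail the paper leaves implicit.
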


\proof According to the previous Theorem, we have to consider the algebra $\,\mathbb{R}\{ u,v\}$, where $\,\text{deg}\, u=p$, $\,\text{deg}\, v=p+1$. In this algebra, the only (up to scalar factors) homogeneous polynomial of degree $\,p+1\,$ is the monomial $\,v$, that corresponds to the natural morphism $\,P\colon \Omega^p\to\Omega^{p+1}$, $\,P(\omega)=\D\omega$.

\qed

\section{Main result: second version}

Let us now prove a variation of Theorem \ref{2.3}, that corresponds to the statement announced in the Introduction.\medskip

\begin{notation}
Let \textbf{Man} be the category of all smooth manifolds and arbitrary smooth maps between them.
\end{notation}

Firstly, let us check that any morphism of functors over $\,\textbf{Man}\,$ automatically satisfies the regularity condition.

\begin{lema} Let $\,P\colon\Omega^p\to\Omega^q\,$ be a morphism of functors over the category $\,{\bf Man}$. The restriction of $\,P\,$ to a smooth manifold $\,X\,$ is a regular morphism $\,P\colon\Omega^p_X\to\Omega_X^q\,$ of sheaves over $X$.
\end{lema}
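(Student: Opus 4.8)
The plan is to unwind the definitions and reduce the statement to a local question about families of forms parametrised smoothly by a manifold $\,T$. Recall that a morphism of functors $\,P\colon\Omega^p\to\Omega^q\,$ over $\,\mathbf{Man}\,$ restricts, over the subcategory of open sets of a fixed manifold $\,X\,$ and inclusions, to a morphism of sheaves $\,P\colon\Omega^p_X\to\Omega^q_X$; this is immediate since inclusions are smooth maps and the functoriality gives $\,P(\omega|_V)=P(\omega)|_V\,$ for $\,V\subseteq U$. What must be checked is regularity: given a smooth manifold $\,T\,$ and a smooth family $\,\{\omega_t\in\Omega^p(U_t)\}_{t\in T}$, i.e. a smooth $p$-form $\,\omega\,$ on $\,U\subseteq X\times T\,$ ``in the $X$-directions'', the family $\,\{P(\omega_t)\}_{t\in T}\,$ should again be smooth in the same sense.

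First I would make precise what ``smooth family'' means in terms of a single object of $\,\mathbf{Man}$: a smooth family $\,\{\omega_t\}_{t\in T}\,$ of $p$-forms on the $\,U_t\,$ is the same as a section $\,\omega\,$ of $\,\Lambda^p T^*X\boxtimes\{0\}\,$ over $\,U\subseteq X\times T$, equivalently a $p$-form $\,\tilde\omega\,$ on $\,U\,$ with no $dt$-components and with coefficients depending smoothly on $\,(x,t)$. The key device is that such an $\,\omega\,$ is literally the ``pullback'' structure one can feed into naturalness: although $\,\omega\,$ itself is not the inverse image of a form under a map $\,U\to Y$, its restriction $\,\omega_t\,$ to each slice is $\,\iota_t^*\tilde\omega\,$ where $\,\iota_t\colon U_t\hookrightarrow U\,$ is the inclusion of the slice $\,t$, and $\,U\,$ is a legitimate object of $\,\mathbf{Man}$. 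So I would apply $\,P\,$ directly to a suitable honest $q$-form on $\,U$, not merely slice by slice.

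The main step is therefore: enlarge the family to an actual differential form on the total space. Consider on $\,U\subseteq X\times T\,$ the $p$-form $\,\tilde\omega\,$ obtained from the family (no $dt$'s). Apply the morphism $\,P\,$ over $\,\mathbf{Man}\,$ to $\,\tilde\omega\,$, getting a genuine $q$-form $\,P(\tilde\omega)\,$ on $\,U$, which is in particular smooth in $\,(x,t)$. Now naturalness applied to the slice inclusions $\,\iota_t\colon U_t\hookrightarrow U\,$ gives $\,\iota_t^*\bigl(P(\tilde\omega)\bigr)=P(\iota_t^*\tilde\omega)=P(\omega_t)$. Hence the family $\,\{P(\omega_t)\}_{t\in T}\,$ is the family of slice-restrictions of the single smooth form $\,P(\tilde\omega)\,$ on $\,U$, and restricting a smooth form on $\,U\subseteq X\times T\,$ to the slices is visibly smooth in $\,t$; that is exactly the regularity condition. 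One subtlety to handle carefully is that $\,P(\tilde\omega)\,$ may a priori carry $dt$-components, so from $\,\iota_t^*P(\tilde\omega)=P(\omega_t)\,$ one recovers only the $X$-part; but that is all that the definition of a smooth family $\,\{P(\omega_t)\}\,$ refers to, so no issue arises.

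The only real obstacle is verifying that the passage ``family $\,\rightsquigarrow\,$ form $\,\tilde\omega\,$ on $\,U$, then $\,P$, then restrict to slices'' does reproduce exactly $\,\{P(\omega_t)\}$, i.e. that $\,P\,$ genuinely commutes with the slice inclusions and not merely with diffeomorphisms — but this is precisely the hypothesis that $\,P\,$ is a morphism of functors over all of $\,\mathbf{Man}$, where the $\,\iota_t\,$ are admissible morphisms. So the proof is essentially: (i) restriction to open-set subcategories makes $\,P\,$ a morphism of sheaves on each $\,X$; (ii) for a smooth family, form the total-space form $\,\tilde\omega$; (iii) use naturalness along slice inclusions to identify $\,P(\omega_t)\,$ with the slices of the smooth form $\,P(\tilde\omega)$; (iv) conclude smoothness of the family. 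I expect no computation beyond bookkeeping of which forms live on which manifolds.
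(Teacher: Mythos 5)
Your argument is correct and is essentially the paper's own proof: both extend the smooth family to a single $p$-form (with no $\mathrm{d}t$-components) on the open set $U\subseteq X\times T$, apply $P$ there, and use functoriality along the slice inclusions $U_t\hookrightarrow U$ to identify $P(\omega_t)$ with the slice restriction of the smooth form $P(\tilde\omega)$. The only difference is cosmetic: the paper first localises to $X=\mathbb{R}^n$, $T=\mathbb{R}^k$ and writes everything in coordinates, whereas you phrase the same steps invariantly.
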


\proof Let $\,T\,$ be a smooth manifold (of parameters), and let us consider an open set $\,U\subseteq X\times T\,$ and a smooth family $\,\{\omega_t\in\Omega^p_X(U_t)\}_{t\in T}\,$ of $p$-forms on the open sets $\,U_t=U\cap(X\times t)$.
Regularity is a local condition, so we can assume $\,X=\mathbb{R}^n$, $\,T=\mathbb{R}^k\,$ and $\,U=X\times T=\mathbb{R}^n\times\mathbb{R}^k$. Let us write 
$$\omega_t\,=\, \sum_{i_1<\dots<i_p} f_{i_1\dots i_p}(x,t)\, \text{d}x_{i_1}\wedge\dots\wedge\text{d}x_{i_p}\,\in\,\Omega_X^p(U_t=\mathbb{R}^n)\ .$$ 

The smoothness of the family $\,\{\omega_t\in\Omega^p_X(U_t)\}_{t\in T}\,$ means that the functions $\,f_{i_1\dots i_p}(x,t)\,$ are smooth on $\,\mathbb{R}^n\times\mathbb{R}^k$, that is,
$$\omega:=\, \sum_{i_1<\dots<i_p} f_{i_1\dots i_p}(x,t)\,\text{d}x_{i_1}\wedge\dots\wedge\text{d}x_{i_p}$$
is a differential $p$-form  on $\,\mathbb{R}^n\times\mathbb{R}^k$. Then, $\,P(\omega)\in\Omega^q(\mathbb{R}^n\times\mathbb{R}^k)$, let us say
$$P(\omega)\,=\, \sum_{j_1<\dots<j_q} g_{j_1\dots j_q}(x,t)\,\text{d}x_{j_1}\wedge\dots\wedge\text{d}x_{j_q}+\cdots\,\text{ terms with  }\text{d}t_j\,\cdots$$
for some smooth functions $\,g_{j_1\dots j_q}(x,t)\,$ on $\,\mathbb{R}^n\times\mathbb{R}^k$.

Now we can write
$$P(\omega_t)\,=\, P(\omega_{|\mathbb{R}^n\times t})$$
(by functoriality)
$$=\, P(\omega)_{|\mathbb{R}^n\times t}\,=\, \sum_{j_1<\dots<j_q} g_{j_1\dots j_q}(x,t)\,\text{d}x_{j_1}\wedge\dots\wedge\text{d}x_{j_q}\ ,$$ 
so that the family $\,\{ P(\omega_t)\}_{t\in T}\,$ is smooth.

\qed

\medskip
In a similar way, it can be proved that, if $\,P\colon\Omega^{p_1}\oplus\cdots\oplus\Omega^{p_k}\to\Omega^q\,$ is a morphism of functors over $\,\textbf{Man}$, then its restriction to a smooth manifold $\,X\,$ is a regular morphism $\, P\colon\Omega^{p_1}_X\oplus\cdots\oplus\Omega^{p_k}_X\to\Omega^q_X\,$ of sheaves over $X$. 
Consequently,

\begin{corolario} Let $\,P\colon\Omega^{p_1}\oplus\cdots\oplus\Omega^{p_k}\to\Omega^q\,$ be a morphism of functors over $\,\mathbf{Man}$. Its restriction to the category $\,\mathbf{Man}_n\,$ is a regular and natural morphism.
\end{corolario}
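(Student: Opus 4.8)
The statement to be proved is the corollary asserting that any morphism of functors $P\colon\Omega^{p_1}\oplus\cdots\oplus\Omega^{p_k}\to\Omega^q$ over $\mathbf{Man}$ restricts to a regular and natural morphism over $\mathbf{Man}_n$. The naturalness part is immediate: a morphism of functors over $\mathbf{Man}$ in particular commutes with pullback along local diffeomorphisms between $n$-manifolds, since these are a special case of arbitrary smooth maps, so the restriction is a morphism of functors over $\mathbf{Man}_n$, i.e.\ a natural morphism. The only content is regularity, and for this I would simply invoke the single-form lemma proved just above together with the remark that follows it.

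The plan is: first observe that, exactly as in the preceding lemma, regularity of the restriction $P\colon\Omega^{p_1}_X\oplus\cdots\oplus\Omega^{p_k}_X\to\Omega^q_X$ is a local statement, so we may take $X=\mathbb{R}^n$, $T=\mathbb{R}^m$, and $U=\mathbb{R}^n\times\mathbb{R}^m$. Given a smooth family $\{(\omega_{1,t},\dots,\omega_{k,t})\}_{t\in T}$ of tuples of forms on the $U_t$, smoothness in the parameters means precisely that each $\omega_{i,t}$ assembles into a single differential $p_i$-form $\omega_i$ on $\mathbb{R}^n\times\mathbb{R}^m$ (write out the coefficient functions $f^{(i)}_{i_1\dots i_{p_i}}(x,t)$ and note they are smooth on the product). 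Then $P(\omega_1,\dots,\omega_k)$ is a single $q$-form on $\mathbb{R}^n\times\mathbb{R}^m$ with smooth coefficients, and by functoriality of $P$ applied to the inclusion $\iota_t\colon\mathbb{R}^n\times\{t\}\hookrightarrow\mathbb{R}^n\times\mathbb{R}^m$ we get $P(\omega_{1,t},\dots,\omega_{k,t})=\iota_t^*\big(P(\omega_1,\dots,\omega_k)\big)$, which depends smoothly on $t$ since it is obtained by pulling back a fixed smooth form along the smoothly varying inclusions. Hence the family $\{P(\omega_{1,t},\dots,\omega_{k,t})\}_{t\in T}$ is smooth, which is exactly regularity.

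Concretely, I would phrase the proof as: \emph{the preceding lemma was stated for a single functor $\Omega^p$; its proof used only that $P$ is a morphism of functors over $\mathbf{Man}$ and goes through verbatim for the direct sum $\Omega^{p_1}\oplus\cdots\oplus\Omega^{p_k}$, the only change being that one writes $k$ families of coefficient functions instead of one. This gives regularity of the restriction to any $X$. Naturalness over $\mathbf{Man}_n$ is clear since local diffeomorphisms of $n$-manifolds are in particular morphisms of $\mathbf{Man}$.} There is essentially no obstacle here — the corollary is a bookkeeping statement whose entire force was already extracted in the lemma it follows. The only point requiring a word of care is making explicit that ``smooth family of tuples'' unpacks to ``a single form on the product manifold for each index $i$'', but this is purely a matter of definitions and the key identity $P(\omega_{i,t})=\iota_t^*P(\omega_i)$ does the rest.
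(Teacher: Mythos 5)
Your proposal is correct and is exactly the argument the paper intends: the paper itself disposes of the corollary by remarking that the single-form lemma's proof extends verbatim to $\Omega^{p_1}\oplus\cdots\oplus\Omega^{p_k}$ (assemble each family into a form on $X\times T$, apply $P$, and restrict via the inclusions $\iota_t$, which are morphisms in $\mathbf{Man}$), with naturalness over $\mathbf{Man}_n$ being immediate since local diffeomorphisms are smooth maps. Your write-up just makes that remark explicit.
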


\begin{teorema}\label{3.3} Let $\,p_1,\dots,p_k\,$ be positive integers, and let $\,\mathbb{R}\{ u_1,v_1,\dots,u_k,v_k\}\,$ be the anti-commutative algebra of polynomials in the variables $\,u_1,v_1,\dots,u_k,v_k$, of degree $\,\mathrm{deg}\, u_i=p_i$, $\,\mathrm{deg}\, v_i=p_i+1$.

Any morphism of functors over the category $\,\mathbf{Man}$
$$ P\colon \Omega^{p_1} \oplus \ldots \oplus \Omega^{p_k} \  \longrightarrow \ \Omega^q\qquad (q\geq 0)  $$
can be written as
$$P(\omega_1,\dots,\omega_k)\,=\, \mathbf{P}(\omega_1,\mathrm{d}\omega_1,\dots,\omega_k,\mathrm{d}\omega_k)$$
for a unique homogeneous polynomial $\,\mathbf{P}(u_1,v_1,\dots,u_k,v_k)\in\mathbb{R}\{ u_1,v_1,\dots,u_k,v_k\}$ of degree $\, q$.
\end{teorema}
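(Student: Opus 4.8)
The plan is to reduce Theorem \ref{3.3} to the already-proven Theorem \ref{2.3}. The key observation is that a morphism of functors $P$ over the larger category $\mathbf{Man}$ of all smooth manifolds and all smooth maps is, in particular, a morphism of functors over each subcategory $\mathbf{Man}_n$ of $n$-dimensional manifolds and local diffeomorphisms. By the Corollary just stated, each such restriction is regular and natural, so Theorem \ref{2.3} applies: for every $n \geq q$ there exists a unique homogeneous polynomial $\mathbf{P}_n \in \mathbb{R}\{u_1,v_1,\dots,u_k,v_k\}$ of degree $q$ with $P(\omega_1,\dots,\omega_k) = \mathbf{P}_n(\omega_1,\mathrm{d}\omega_1,\dots,\omega_k,\mathrm{d}\omega_k)$ on all $n$-manifolds. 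First I would check that these polynomials are independent of $n$: given $n' > n \geq q$, restrict to an open set $U \cong \mathbb{R}^n$ sitting inside $\mathbb{R}^{n'}$ via the inclusion (a smooth map, hence admissible in $\mathbf{Man}$), and observe that $\mathbf{P}_{n'}$ and $\mathbf{P}_n$ must induce the same operator on $n$-forms over $\mathbb{R}^n$; since $q \leq n$, Lemma \ref{2.4} forces $\mathbf{P}_{n'} = \mathbf{P}_n$. Call the common polynomial $\mathbf{P}$.

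It remains to handle low-dimensional manifolds, i.e. to show that $P(\omega_1,\dots,\omega_k) = \mathbf{P}(\omega_1,\mathrm{d}\omega_1,\dots,\omega_k,\mathrm{d}\omega_k)$ also holds on any manifold $X$ of dimension $m < q$. But on such an $X$ every differential form of degree $> m$ vanishes; in particular the right-hand side $\mathbf{P}(\omega_1,\mathrm{d}\omega_1,\dots)$, being a $q$-form with $q > m$, is automatically zero, and likewise $P(\omega_1,\dots,\omega_k) \in \Omega^q(X) = 0$. So both sides vanish identically and the identity holds trivially. This covers all manifolds $X$ regardless of dimension, since $\mathbf{Man}$ is the disjoint union (over dimensions) of its full subcategories of manifolds of each fixed dimension, and a morphism of functors over $\mathbf{Man}$ is determined by its restrictions to these. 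Uniqueness of $\mathbf{P}$ follows from Lemma \ref{2.4} applied on $X = \mathbb{R}^q$ (or any $\mathbb{R}^n$ with $n \geq q$): if two homogeneous polynomials of degree $q$ induced the same operator, their difference would induce the zero operator, hence be zero.

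The only genuinely delicate point is the comparison of the operators $\mathbf{P}_n$ for different values of $n$, and more precisely the need to invoke the full category $\mathbf{Man}$ — not just local diffeomorphisms — to relate forms on $\mathbb{R}^n$ to forms on $\mathbb{R}^{n'}$. Concretely, I would take the standard inclusion $\iota \colon \mathbb{R}^n \hookrightarrow \mathbb{R}^{n'}$, pick forms $\omega_i$ on $\mathbb{R}^n$ realizing a prescribed nonzero value of $\mathbf{P}_n - \mathbf{P}_{n'}$ as in the proof of Lemma \ref{2.4}, extend them to forms $\tilde\omega_i$ on $\mathbb{R}^{n'}$ with $\iota^*\tilde\omega_i = \omega_i$ (e.g. pull back along the projection $\mathbb{R}^{n'} \to \mathbb{R}^n$), and then use $\iota^* P(\tilde\omega_1,\dots,\tilde\omega_k) = P(\omega_1,\dots,\omega_k)$ together with the commutation of $\iota^*$ with $\mathrm{d}$ and with $\wedge$. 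The main obstacle is thus bookkeeping rather than conceptual: making sure the chosen test forms and their extensions are compatible with $\iota^*$, which is exactly why the proof of Lemma \ref{2.4} was arranged to use only wedge products of exact forms and at most one additional generator. Everything else is a formal reduction.
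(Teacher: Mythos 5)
Your proposal is correct and follows essentially the same route as the paper: restrict $P$ to each $\mathbf{Man}_n$ with $n\geq q$, apply Theorem \ref{2.3} to get polynomials $\mathbf{P}_n$, and identify them by pulling forms back along the section/projection pair $\iota\colon\mathbb{R}^n\hookrightarrow\mathbb{R}^{n'}$, $\mathrm{pr}\colon\mathbb{R}^{n'}\to\mathbb{R}^n$ together with Lemma \ref{2.4} (the paper phrases this with $X_m\hookrightarrow X_m\times\mathbb{R}^{n-m}\to X_m$ for a general $m$-manifold, which is the same device), finishing with the observation that both sides vanish on manifolds of dimension below $q$. The only cosmetic difference is that you worry about choosing specific test forms compatible with $\iota^*$, which is unnecessary: the identity $\mathbf{P}_{n'}(\omega,\mathrm{d}\omega,\dots)=\mathbf{P}_n(\omega,\mathrm{d}\omega,\dots)$ holds for \emph{arbitrary} forms on $\mathbb{R}^n$, since every form is $\iota^*$ of its pullback along $\mathrm{pr}$, and Lemma \ref{2.4} then does all the work.
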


\proof Let $\,P_n\,$ be the restriction of $\,P\,$ to the category $\,\mathbf{Man}_n$. By Theorem \ref{2.3}, for $\,n\geq q$ there exists a unique homogeneous polynomial $\,\mathbf{P}_n\in\mathbb{R}\{ u_1,v_1,\dots,u_k,v_k\}\,$ of degree $\,q\,$ such that 
$$ P(\omega_1,\dots,\omega_k)\,=\,P_n(\omega_1,\dots,\omega_k)\,=\, \mathbf{P}_n(\omega_1,\mathrm{d}\omega_1,\dots,\omega_k,\mathrm{d}\omega_k)$$
for any $n$-manifold $\,X\,$ and any $\,(\omega_1,\dots,\omega_k)\in \Omega^{p_1}(X)\oplus\dots\oplus\Omega^{p_k}(X)$.

Let us check that $\,\mathbf{P}_n=\mathbf{P}_m\,$ for all $\,n\,$ and $\,m\,$ greater that $\,q$. Suppose $\,m<n$. For any $m$-manifold $\,X_m$, consider the $n$-manifold  $\,X_m\times\mathbb{R}^{n-m}$, the natural projection $\,\pi\colon X_m\times\mathbb{R}^{n-m}\to X_m\,$ and the inclusion $\,i\colon X_m\hookrightarrow X_m\times\mathbb{R}^{n-m}$, $\,x\mapsto(x,0)$, so that $\,\pi\circ i=\text{Id}$. 

For any $\,(\omega_1,\dots,\omega_k)\in \Omega^{p_1}(X_m)\oplus\dots\oplus\Omega^{p_k}(X_m)$, on the one hand
$$P_m(\omega_1,\dots,\omega_k)\,=\, \mathbf{P}_m(\omega_1,\mathrm{d}\omega_1,\dots,\omega_k,\mathrm{d}\omega_k) \ , $$
and, on the other,
\begin{align*}
P_m(&\omega_1,\dots,\omega_k)\,=\, P(\omega_1,\dots,\omega_k)\,=\,P(i^*\pi^*\omega_1,\dots,i^*\pi^*\omega_k)\\
&\,=\,i^*P(\pi^*\omega_1,\dots,\pi^*\omega_k)\,=\,i^*\mathbf{P}_n(\pi^*\omega_1,\pi^*\mathrm{d}\omega_1,\dots,\pi^*\omega_k,\pi^*\mathrm{d}\omega_k)\\
&\,=\,\mathbf{P}_n(i^*\pi^*\omega_1,i^*\pi^*\mathrm{d}\omega_1,\dots,i^*\pi^*\omega_k,i^*\pi^*\mathrm{d}\omega_k)\,=\,\mathbf{P}_n(\omega_1,\mathrm{d}\omega_1,\dots,\omega_k,\mathrm{d}\omega_k)
\end{align*}
so that $\,\mathbf{P}_n=\mathbf{P}_m$.

Let us write $\,\mathbf{P}\,$ for this identical polynomials $\,\mathbf{P}_n$. By definition of the $\,\mathbf{P}_n$, it holds
$$P(\omega_1,\dots,\omega_k)\,=\, \mathbf{P}(\omega_1,\mathrm{d}\omega_1,\dots,\omega_k,\mathrm{d}\omega_k)$$
for any $\,(\omega_1,\dots,\omega_k)\in \Omega^{p_1}(X)\oplus\dots\oplus\Omega^{p_k}(X)\,$ on any manifold $\,X\,$ of dimension $\,n\geq q$. This equality also holds for manifolds $\,X\,$ of dimension $\,n<q$, because in such a case, both terms of the formula are zero, for they take values on $\,\Omega^q(X)=0$.

\qed

The case $\,p_1=\dots=p_k=1\,$ of this theorem has been proved by Freed-Hopkins (\cite{FreedHopkins}, Thm. 8.1).

\section{Appendix: $\g$-Valued forms associated to a connection}
 
Let us fix a Lie group $G$ and let $\mathfrak{g}$ be its Lie algebra. Given a principal $G$-bundle $P\to X$ with a principal connection $\alpha$ on it, let us denote $\Theta=\Theta(P,\alpha)$ the curvature form, which is a $\g$-valued $2$-form on $P$ with the following standard properties:
\begin{enumerate}
\item[{\it 1.}] {\it It is horizontal}: If $D$ is a vector field
tangent to the fibres of $P \to X$, then
$$i_D \Theta = 0 \ . $$

\item[{\it 2.}] {\it It is $G$-equivariant}: For any element $g \in G$, it holds:
$$ (R_g^* \Theta) (D , \bar{D} ) \, = \, \Ad (g^{-1}) (\Theta (D , \bar{D} )) \ ,  $$
where $R_g \colon P \to P$, $R_g (p) = p \cdot g$, denotes right
translation by $g \in G$.

\item[{\it 3.}] {\it It is invariant under isomorphisms}: if $\Phi \colon (P , \alpha) \to (\bar{P} , \bar{\alpha})$ is an isomorphism of principal $G$-bundles over $X$ such that $\Phi^* \bar{\alpha} = \alpha$,
then
$$ \Phi^* \left[ \Theta (\bar{P} , \bar{\alpha}) \right] \, = \, \Theta (P , \alpha) \ . $$

\item[{\it 4.}] {\it It is stable under arbitrary base changes}: For any
smooth map $f \colon Y \to X$, it holds:
$$ f^* \left[ \Theta (P, \alpha) \right] \, = \, \Theta (f^* P , f^* \alpha )\ . $$
\end{enumerate}

In this section we shall show that the properties {\it 1-4} essentially characterise the curvature $2$-form. More generally, we will determine all the differential forms of arbitrary order satisfying the properties {\it 1-4}, with values on a linear representation of $G$ (Theorem \ref{TeoremaGeneral}).\medskip

Let $P\to X$ be a principal fibre bundle with group $G$. We follow the conventions of \cite{Kobayashi1}; in particular, $G$ acts on $P$ on the right.


\begin{lemma}\label{LemaUno}
Let $P= \mathbb{R}^n \times G \to \mathbb{R}^n$ be the trivial $G$-bundle, let $\alpha$ be a principal connection on it, let $s \colon \mathbb{R}^n \to P$ be a global section, and let $x_0 \in \mathbb{R}^n$ be a point.

There  exists an isomorphism $\Phi \colon P \to P$ such that:
$$ (s^* \Phi^* \alpha)_{x_0} \, = \, 0 \ . $$
\end{lemma}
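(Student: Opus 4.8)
The plan is to reduce the statement to a standard fact about flattening a connection to first order at a point by a suitable gauge transformation. Recall that an isomorphism $\Phi\colon P\to P$ of the trivial bundle $P=\mathbb{R}^n\times G$ over the identity of $\mathbb{R}^n$ is the same as a smooth map (gauge transformation) $\phi\colon\mathbb{R}^n\to G$, acting by $\Phi(x,g)=(x,\phi(x)g)$; composing $\Phi$ with the global section $s$ amounts to changing $s$ to the section $\Phi\circ s$. After trivialising by $s$, the connection $\alpha$ is encoded by the $\mathfrak{g}$-valued $1$-form $A:=s^*\alpha$ on $\mathbb{R}^n$, and the transformed connection satisfies $s^*\Phi^*\alpha = \operatorname{Ad}(\phi^{-1})A + \phi^*\theta$, where $\theta$ is the (left, say) Maurer--Cartan form of $G$ — equivalently $\phi^{-1}\,\mathrm{d}\phi$ in matrix notation. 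So the claim becomes: given $A$ and $x_0$, there is $\phi\colon\mathbb{R}^n\to G$ with $\operatorname{Ad}(\phi(x_0)^{-1})A_{x_0} + (\phi^*\theta)_{x_0}=0$.

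First I would reduce to the case $x_0=0$ by a translation, and normalise $\phi(0)=e$ (post-composing with a constant group element does not affect the first-order condition at the single point $0$ beyond an $\operatorname{Ad}$, which can be absorbed), so the requirement simplifies to $(\phi^*\theta)_0 = -A_0$, i.e. I need a map $\phi$ with $\phi(0)=e$ whose left-logarithmic derivative at $0$ is the prescribed element $-A_0\in\mathfrak{g}\otimes T_0^*\mathbb{R}^n$. This is easy: choose a basis $\partial_1,\dots,\partial_n$ of $T_0\mathbb{R}^n$ and set $v_i:=-A_0(\partial_i)\in\mathfrak{g}$; then define $\phi(x):=\exp\big(\sum_i x_i v_i\big)$, where $\exp\colon\mathfrak{g}\to G$ is the Lie-group exponential. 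Since $d\exp_0=\operatorname{id}_{\mathfrak{g}}$ and the Maurer--Cartan form at $e$ is the identity, one computes directly that $(\phi^*\theta)_0(\partial_i)=v_i=-A_0(\partial_i)$ for every $i$, hence $(\phi^*\theta)_0=-A_0$, which is exactly what is needed. Translating back by $x\mapsto x+x_0$ and taking $\Phi(x,g)=(x,\phi(x-x_0)g)$ finishes the construction.

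The only genuine subtlety — and the step I expect to need the most care — is bookkeeping the conventions: which Maurer--Cartan form (left vs.\ right) appears in the gauge-transformation formula for $s^*\Phi^*\alpha$ given that, per the stated conventions following \cite{Kobayashi1}, $G$ acts on $P$ on the \emph{right}; and correspondingly whether one writes $\phi(x)g$ or $g\phi(x)$ for the gauge action and whether $\operatorname{Ad}(\phi^{-1})$ or $\operatorname{Ad}(\phi)$ shows up. None of this affects the argument in substance: in every sign/side convention the transformed connection at $x_0$ is $(\text{an }\operatorname{Ad}\text{-twist of }A_{x_0}) + (\text{a logarithmic derivative of }\phi\text{ at }x_0)$, and the one-parameter-subgroup-style ansatz $\phi(x)=\exp(\sum_i x_i v_i)$ (with $v_i$ chosen, up to sign and an $\operatorname{Ad}$, as the components of $-A_{x_0}$) realises any prescribed first-order jet of $\phi$ at the point, killing the connection there. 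I would also remark that only the $1$-jet of $\phi$ at $x_0$ matters, so there is no global obstruction and no need for $\mathbb{R}^n$ to be contractible beyond having $\exp$ defined globally, which it is on all of $\mathfrak{g}$.
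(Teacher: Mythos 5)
Your argument is correct, but it takes a genuinely different route from the paper's. You work entirely in the trivialisation determined by $s$: you encode the connection by $A=s^*\alpha$, invoke the local gauge-transformation formula $s^*\Phi^*\alpha=\Ad(\phi^{-1})A+\phi^*\theta$, and then realise the required value of the logarithmic derivative at $x_0$ by the explicit ansatz $\phi(x)=\exp\left(\sum_i (x_i-(x_0)_i)v_i\right)$, using that $\exp$ has identity differential at $0$. The paper instead argues geometrically: it first produces a second global section $\bar s$ with $\bar s(x_0)=s(x_0)$ whose image is tangent at $s(x_0)$ to the horizontal subspace $\ker\alpha_{s(x_0)}$, so that $(\bar s^*\alpha)_{x_0}=0$ holds by the very definition of the connection form; it then takes $\Phi$ to be the bundle automorphism carrying $s$ to $\bar s$, namely $\Phi(x,g)=(x,g(x)\cdot g)$ with $\bar f(x)=g(x)f(x)$, and concludes by the tautology $(s^*\Phi^*\alpha)_{x_0}=(\bar s^*\alpha)_{x_0}=0$. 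The paper's version buys immunity from exactly the left/right and sign conventions you identify as the delicate point: no Maurer--Cartan form and no $\Ad$-twist ever appear. Your version buys a completely explicit $\Phi$ and makes transparent that only the $1$-jet of the gauge transformation at $x_0$ matters. One small point on your side: since $s$ need not be the unit section, writing $s(x)=(x,f(x))$ the map whose logarithmic derivative must be prescribed is $f^{-1}\phi f$ rather than $\phi$ itself, so the correct choice is $v_i=-\Ad(f(x_0))\,A_{x_0}(\partial_i)$ (or its analogue in your chosen convention); you anticipate precisely this with your ``up to sign and an $\Ad$'' remark, so nothing is broken.
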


\proof Let $\bar s\colon \mathbb{R}^n \to P$ be a global section such that $\bar s(x_0)=s(x_0)$ and $\bar s_{*}(T_{x_0}\mathbb{R}^n)=\text{ker}\,\alpha_{s(x_0)}$, i.e., $(\bar s^*\alpha)_{x_0}=0$. Let us write $s(x)=(x,f(x))$ and $\bar s(x)=(x,\bar f(x))$. Let $g(x)\in G$ be such that $\bar f(x)=g(x)\cdot f(x)$; then the isomorphism $\Phi\colon P\to P$, $\Phi(x,g)=(x,g(x)\cdot g)$, satisfies $\bar s=\Phi\circ s$, and hence $\,(s^* \Phi^* \alpha)_{x_0}=(\bar s^*\alpha)_{x_0}=0$.

\hfill$\square$ \medskip

 The adjoint action on the Lie algebra of an element $g \in G$ will be written $\Ad (g) \colon \g \to \g$. This map induces a linear map $\Ad (g) \colon S^q\g \to S^q\g$, where $S^q\g$ denotes the $q$-th symmetric  tensor power of $\g$; in other words, the adjoint action of $G$ over $\g$ induces an action on $S^q\g$.\medskip
 
The following statement is a reformulation of a result due to Kol\'{a}\v{r} (\cite{Kolar}). The original proof is based on a generalisation of the classical Utiyama theorem on gauge-invariant Lagrangians.

\begin{theorem}\label{TeoremaGeneral}
Let $G$ be a Lie group, let $V$ be a linear representation of $G$ and let $q$ be a natural number. 

Associated to any pair $(P \to X , \alpha)$ of a principal $G$-bundle and a principal connection $\alpha$ on it, let $\theta (P , \alpha)$ be a $2q$-form on $P$ with values on $V$, satisfying the properties {\it 1-4 } written above.

Then, there exists a $G$-equivariant linear map $T \colon S^q\g \to V$ such that
\begin{equation}\label{Tesis}
\theta (P , \alpha) \, = \, T \circ \left[ \Theta (P, \alpha) \, \wedge
\stackrel{q}{\ldots}
 \wedge \, \Theta (P , \alpha) \right] \ ,
\end{equation}  
for all pairs $(P \to X  , \, \alpha)$.

Any differential form of odd order $\theta (P, \alpha)$ satisfying
properties {\it 1-4} above is identically zero.
\end{theorem}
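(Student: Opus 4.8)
The plan is to reduce the theorem about $\mathfrak{g}$-valued forms on a principal bundle to the case of differential forms on manifolds, where we can apply the second version of our main result (Theorem \ref{3.3}). The bridge is the observation that a form on $P$ satisfying properties \emph{1--4} is "basic" in a suitable sense: horizontality plus equivariance force it to be determined by data living downstairs, up to the adjoint action. Concretely, I would first treat the \textbf{local, trivial case} $P = \mathbb{R}^n \times G \to \mathbb{R}^n$. Fixing a global section $s$, one obtains from $\theta(P,\alpha)$ a $V$-valued $2q$-form $s^*\theta$ on $\mathbb{R}^n$, and from $\alpha$ the curvature pulls back to a $\mathfrak{g}$-valued $2$-form $F := s^*\Theta$ with $\mathrm{d}F$ expressible via the structure equation. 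The key point, using Lemma \ref{LemaUno}, is that at any point $x_0$ one may normalise the connection so that $(s^*\Phi^*\alpha)_{x_0} = 0$; by property \emph{3} (invariance under isomorphisms), $\theta$ is unchanged, so the value $\theta_{x_0}$ depends only on the $\infty$-jet of the \emph{curvature} at $x_0$, not on the connection itself.

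The next step is to package this into a \textbf{natural operation on forms} in the sense of the main theorem. After the normalisation, a principal connection on $\mathbb{R}^n \times G$ is, via the section, a $\mathfrak{g}$-valued $1$-form $A$, and $\theta_{x_0}$ is a function of the jets of $A$ that, by the normalisation argument, factors through the jets of $F = \mathrm{d}A + \tfrac12[A,A]$. Choosing a basis $(e_a)$ of $\mathfrak{g}$ and writing $F = \sum_a F^a e_a$ with $F^a$ ordinary $2$-forms on $\mathbb{R}^n$, the assignment $(F^1,\dots,F^{\dim\mathfrak{g}}) \mapsto$ (components of $s^*\theta$ in a basis of $V$) is a collection of operations $\Omega^2 \oplus \cdots \oplus \Omega^2 \to \Omega^{2q}$; properties \emph{4} (base change) and the functoriality built into \emph{3} make these compatible with arbitrary smooth maps, so Theorem \ref{3.3} applies. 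It yields a homogeneous polynomial $\mathbf{P}$ of degree $2q$ in anti-commuting variables $x_a$ (degree $2$) and $y_a = \mathrm{d}F^a$ (degree $3$). Since $F^a$ has even degree the variables $x_a$ commute, and a degree count together with the fact that $\mathrm{d}F^a$ has odd degree $3$ — which cannot combine evenly to give $2q$ unless all $y_a$-exponents vanish — forces $\mathbf{P}$ to be a polynomial in the $x_a$ alone, homogeneous of degree $q$ in them. Such a polynomial is exactly a linear map $T_0 \colon S^q\mathfrak{g} \to V$ applied to $F \wedge \overset{q}{\cdots} \wedge F$.

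The remaining work is to \textbf{globalise and establish equivariance}. Equivariance of $T_0$ under $G$ follows from property \emph{2}: acting by $R_g$ multiplies $\Theta$ by $\mathrm{Ad}(g^{-1})$ and acts on $\theta$ by the representation on $V$, and comparing the two sides of \eqref{Tesis} pointwise forces $T_0 \circ S^q(\mathrm{Ad}(g^{-1})) = \rho(g^{-1}) \circ T_0$, i.e. $T := T_0$ is $G$-equivariant. To pass from the trivial bundle over $\mathbb{R}^n$ to an arbitrary $(P\to X,\alpha)$: both sides of \eqref{Tesis} are forms on $P$ that are stable under pullback along bundle maps (property \emph{4} for $\theta$, and the corresponding classical property for $\Theta$), every principal bundle with connection is locally isomorphic to the trivial one, and $P\to X$ itself is covered by such local trivialisations; since $P$ can be written, near any point, as an open subset of $\mathbb{R}^n \times G \subseteq \mathbb{R}^{n+\dim G}$, and the identity \eqref{Tesis} holds there, it holds on all of $P$ by the sheaf property of differential forms. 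Finally, the \textbf{odd-order vanishing}: a $(2q+1)$-form satisfying \emph{1--4} gives, by the same reduction, a natural operation into $\Omega^{2q+1}$, hence a homogeneous polynomial of odd degree $2q+1$ in the even-degree variables $x_a$ and the odd-degree variables $y_a$; but as above the $y_a$ cannot appear, leaving a polynomial of odd degree in commuting even-degree variables, which is necessarily zero. I expect the \textbf{main obstacle} to be the normalisation argument: rigorously showing that $\theta_{x_0}$ depends only on the jet of the curvature — and that the resulting operation on the $F^a$ is well-defined, smooth, and genuinely functorial for \emph{all} smooth maps (not merely local diffeomorphisms) — is where the real content lies, and it is the step that makes Theorem \ref{3.3}, rather than its $\mathbf{Man}_n$ predecessor, the right tool.
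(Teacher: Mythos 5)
Your strategy---first show that $\theta$ factors through the curvature, then classify natural operations on the $\g$-valued $2$-form $F=s^*\Theta$---is the Utiyama-style route of Kol\'{a}\v{r}'s original proof, not the one this paper takes, and as written it has genuine gaps. First, Lemma \ref{LemaUno} only produces a gauge in which $s^*\Phi^*\alpha$ vanishes \emph{at the single point} $x_0$; it says nothing about the higher derivatives of $\alpha$ at $x_0$, so it does not show that $\theta_{x_0}$ depends only on the jet of the curvature. That reduction is precisely the generalised Utiyama theorem, which you correctly flag as the main obstacle but then assume rather than prove. Even granting it, the resulting operation is only defined on tuples $(F^1,\dots,F^{\dim\g})$ that actually arise as curvatures of connections, and these do not in general exhaust $\Omega^2\oplus\cdots\oplus\Omega^2$ (for abelian $G$ in dimension $\geq 3$ they must at least be closed), so Theorem \ref{3.3} does not directly apply to your operation without an extension argument.

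Second, and more concretely, your degree count is wrong: the variables $y_a=\d F^a$ have odd degree $3$, but two odd numbers sum to an even number, so monomials such as $y_1y_2x_1^{q-3}$ are homogeneous of degree $2q$ whenever $q\geq 3$ and $\dim\g\geq 2$ (anti-commutativity kills $y_a^2$ but not $y_ay_b$ for $a\neq b$). Hence the $y_a$ are \emph{not} excluded by degree considerations, and the same failure undermines your odd-order argument, since e.g.\ $y_1x_1^{q-1}$ has degree $2q+1$. The paper sidesteps all of this by applying Theorem \ref{3.3} to the connection form itself: property \emph{4} makes $\alpha_{|s}\mapsto\theta(P,\alpha)_{|s}$ a natural morphism $\Omega^1\otimes\g\to\Omega^{2q}\otimes V$, hence a polynomial in $\alpha_{|s}$ (degree $1$) and $\d(\alpha_{|s})$ (degree $2$); rewriting $\d\alpha$ via $\Theta=\d\alpha+\frac12\,\alpha\wedge\alpha$ and then invoking Lemma \ref{LemaUno}---which only needs to kill the \emph{value} of $\alpha_{|s}$ at a point---eliminates every monomial containing an undifferentiated $\alpha$, leaving exactly the pure $\Theta$-products; for odd total degree no such products exist, which gives the vanishing statement. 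Your treatment of equivariance via property \emph{2} (together with the spanning Lemma \ref{LemaDos}) and of globalisation via properties \emph{3} and \emph{4} matches the paper's.
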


\begin{proof} Let $\theta (P, \alpha)$ be a $2q$-form as in the statement, and let us prove formula (\ref{Tesis}).

As any principal bundle is locally isomorphic to the trivial bundle, properties {\it 3} and {\it 4} allow to reduce the reasoning to the case of the trivial bundle $P = X \times G \to X$.

Moreover, due to properties {\it 1} and {\it 2}, the form $\theta (P, \alpha)$, for any connection $\alpha$ on the trivial bundle $P$,
is determined by its restriction to a fixed global section $s \colon X \to P$. 

Hence, it is enough to prove that there exists a $G$-equivariant linear map $T \colon S^q\g \to V$ such that 
\begin{equation}\label{ToProve}
\theta (P, \alpha)_{|s} = T \circ \left[ \Theta (P, \alpha)_{|s}\, \wedge \stackrel{q}{\ldots} \wedge \, \Theta (P , \alpha)_{|s} \right]  
\end{equation}
for any connection $\alpha$ on the trivial bundle $P$.

To this end, observe that property {\it 4} implies that the map:
\begin{equation}\label{MorfismoDeHaces}
\Omega^1 \otimes \g \longrightarrow
\Omega^{2q} \otimes V \quad , \quad \alpha_{|s} \longmapsto \theta (P,
\alpha)_{|s} \ ,
\end{equation} 
is a well-defined morphism of functors over the category $\Man$.

Let us fix a basis $(v_1, \ldots , v_s)$ of $V$, and write:
$$ \alpha = \sum_{j=1}^s \alpha_j \otimes v_j \quad , \quad \Theta ( P , \alpha) = \sum_{j=1}^s
\Theta_j (P , \alpha) \otimes v_j \quad , \quad \theta (P , \alpha) = \sum_{j=1}^s
\theta_j (P , \alpha) \otimes v_j $$ for some ordinary forms $\alpha_j , \Theta_j (P,
\alpha)$, $\theta_j (P, \alpha)$ on $P$.

Applying Theorem \ref{3.3} to the components of the morphism (\ref{MorfismoDeHaces}), it
follows the existence of a unique collection of scalars $\lambda^I_{ j} , \delta^{KL}_j
\in \RR$ such that, for any connection $\alpha$ on the trivial bundle $P$,
\begin{align*}
\theta_j (P , \alpha)_{|s} \  &=  \  \sum_{|I|= q} \lambda^{I}_{j} \, \d
(\alpha_{{i_1}|s} ) \, \wedge \stackrel{q}{\ldots} \wedge \, \d (\alpha_{{i_q}|s} )  \\
&  \quad + \, \sum_{ 2r + s  = 2q } \delta^{KL}_j \d
(\alpha_{{k_1}|s} )\, \wedge \stackrel{r}{\ldots} \wedge \, \d (\alpha_{{k_r}|s} ) \wedge (\alpha_{l_1})_{|s} 
\wedge \stackrel{s}{\ldots} \wedge (\alpha_{l_{s}})_{|s} \ .
\end{align*}

As $\Theta = \d \alpha +\frac{1}{2}\, \alpha \wedge \alpha$, the formula above can be re-written, 
with other unique\smallskip

\noindent coefficients $\lambda^I_{ j} , \mu^{KL}_j
\in \RR$,
\begin{align*}
\theta_j (P , \alpha)_{|s} \ &= \ \sum_{|I|=q} \lambda^I_{j} \ \Theta_{i_1} (P,
\alpha)_{|s}\,  \wedge \stackrel{q}{\ldots} \wedge \, \Theta_{i_q} (P , \alpha)_{|s} \\
& \quad + \, \sum_{ 2r + s  = 2q } \mu^{KL}_j \d
(\alpha_{{k_1}|s} )\, \wedge \stackrel{r}{\ldots} \wedge \,  \d (\alpha_{{k_r}|s} ) \, \wedge \, (\alpha_{l_1})_{|s} \,
\wedge \stackrel{s}{\ldots} \wedge \, (\alpha_{l_{s}})_{|s} \ .
\end{align*}

At any point $x \in X$, there always exists a connection $\bar{\alpha}=\Phi^*\alpha$, isomorphic to
$\alpha$, such that $(\bar{\alpha} _{|s})_{x}= 0$ (Lemma \ref{LemaUno}). As $\theta_j(P, \alpha)=(\Phi^{-1})^*[\theta_j
(P, \bar\alpha)]$, the coefficients $\mu_j^{KL}$
above have to be zero; that is to say,
$$ \theta_j (P , \alpha)_{|s} \, =\,  \sum_{|I|=q} \lambda^I_{j} \ \Theta_{i_1} (P,
\alpha)_{|s} \, \wedge \stackrel{q}{\ldots} \wedge \, \Theta_{i_q} (P , \alpha)_{|s}  \ , $$
which is the required formula (\ref{ToProve}), where 
 $T\colon S^q\g \to V$ is the linear map defined by the scalars $\lambda^I_{j}$.

To check the $G$-equivariance of $T$, let us apply $R_g^*$ in formula (\ref{Tesis}). As $\theta$ and $\Theta$ are $G$-equivariant, it follows:
$$ \Ad (g^{-1}) ( T( \Theta ( D_1 , \bar{D}_1) \wedge \ldots \wedge \Theta (D_q , \bar{D}_q) )$$
$$= \, T \left(  \Ad (g^{-1}) (\Theta (D_1 , \bar{D}_1) \wedge \ldots \wedge \Theta (D_q , \bar{D}_q) ) \right) $$
for any vector fields $D_1 ,  \ldots , \bar{D}_q$ on any principal bundle $P \to X$.

When varying the pair $(P \to X, \alpha)$, the values $\Theta (D_1 , \bar{D}_1) \wedge \ldots \wedge \Theta (D_q , \bar{D}_q) $ linearly span $S^q \g$ (see Lemma \ref{LemaDos} below). Hence, the relation above allows to conclude that $T \colon S^q \g \to V$ is $G$-equivariant.

Finally, if $\theta (P, \alpha)$ is a $(2q-1)$-form, a similar reasoning applies. Nevertheless, in this case Theorem \ref{3.3} implies that (\ref{MorfismoDeHaces}) is the zero map, and therefore $\theta (P, \alpha)$ is the zero form.

\qed
\end{proof}

\medskip
Observe that the last assertion of this theorem implies that the $2q$-forms $\,\theta=\theta(P,\alpha)\,$ have vanishing covariant differential; that is, they satisfy a kind of Bianchi identity.

\begin{lemma}\label{LemaDos}
Let $\,v_1\cdots v_q\in S^q\g\,$  be a product of elements $\, v_1,\dots,v_q\in\mathfrak{g}$.
There exists a principal bundle $P \to X$, a connection $\alpha $ on it, a point $p \in P$ and vectors $D_1, \ldots , D_{2q} \in T_pP$ such that:
$$ \left( \Theta \, \wedge \stackrel{q}{\cdots} \wedge \, \Theta \right)_p (D_1, \ldots , D_{2q} ) \, = \, v_1\cdots v_q \ , $$
where $\Theta$ denotes the curvature form of $\alpha$.
\end{lemma}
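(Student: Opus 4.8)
The strategy is to build an explicit example, and the natural place to look is a product of $q$ independent copies of the simplest non-trivial situation. First I would recall that for the trivial bundle $P = X \times G \to X$ with connection form $\alpha$ (written via a global section), the curvature is $\Theta = \d\alpha + \tfrac12 [\alpha,\alpha]$, and that at a point where $\alpha$ vanishes (which we may always arrange by Lemma \ref{LemaUno}) one simply has $\Theta_{x_0} = (\d\alpha)_{x_0}$. So the task reduces to producing, on a single copy, a connection on some $X \times G$ and a point $x_0$ with $\alpha_{x_0}=0$ such that $(\d\alpha)_{x_0}$, evaluated on a suitable pair of tangent vectors, equals a prescribed $v \in \g$.

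For a single element $v \in \g$: take $X = \RR^2$ with coordinates $(t_1,t_2)$, and on $P = \RR^2 \times G$ define the connection by $\alpha_{|s} = t_1\, \d t_2 \otimes v$ (extended $G$-equivariantly, i.e. $\alpha = \Ad$-translate of $t_1\,\d t_2 \otimes v + \theta_{MC}$ in the usual way, where $\theta_{MC}$ is the Maurer--Cartan form of $G$). Then $\alpha_{|s}$ vanishes at the origin $x_0 = 0$, and $(\d\alpha_{|s})_0 = (\d t_1 \wedge \d t_2)\otimes v$, so $\Theta_0(\partial_{t_1}, \partial_{t_2}) = v$. Now to hit a product $v_1\cdots v_q \in S^q\g$, I would take the fibre product: $X = (\RR^2)^q$ with coordinates $(t_1^{(a)}, t_2^{(a)})_{a=1,\dots,q}$, and $\alpha_{|s} = \sum_{a=1}^q t_1^{(a)}\,\d t_2^{(a)} \otimes v_a$. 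At the origin this vanishes, and the $\g$-valued $2$-form $\Theta_0 = \sum_a (\d t_1^{(a)}\wedge \d t_2^{(a)})\otimes v_a$ satisfies
$$ \left(\Theta \wedge \stackrel{q}{\cdots} \wedge \Theta\right)_0 \;=\; q!\,\bigl(\d t_1^{(1)}\wedge \d t_2^{(1)}\wedge \cdots \wedge \d t_1^{(q)}\wedge \d t_2^{(q)}\bigr)\otimes (v_1\cdots v_q), $$
the product in $S^q\g$ arising because the wedge of $\g$-valued forms symmetrises the $\g$-factors. Evaluating on $D_1 = \partial_{t_1^{(1)}}, D_2 = \partial_{t_2^{(1)}}, \dots, D_{2q-1} = \partial_{t_1^{(q)}}, D_{2q} = \partial_{t_2^{(q)}}$ gives $q!\, v_1\cdots v_q$; rescaling one of the coordinates (say replacing $t_1^{(1)}$ by $t_1^{(1)}/q!$) absorbs the factorial, or one simply notes the statement as written is fine up to rescaling a single $D_i$.

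The only point requiring a little care — and the place I would expect a referee to want detail — is checking that the globally defined $G$-equivariant object whose restriction to $s$ is $\sum_a t_1^{(a)}\,\d t_2^{(a)}\otimes v_a + \theta_{MC}$ really is a connection form (it is: adding a horizontal $\Ad$-equivariant $\g$-valued $1$-form to the Maurer--Cartan form of the trivial bundle always yields a connection), and that the curvature computed intrinsically on $P$ agrees, at points of $s$ where $\alpha_{|s}=0$, with $(\d\alpha_{|s})$; this is immediate from $\Theta = \d\alpha + \tfrac12[\alpha,\alpha]$ together with the structure equation relating $\Theta_{|s}$ and $\d(\alpha_{|s}) + \tfrac12[\alpha_{|s},\alpha_{|s}]$, the bracket term dropping out at $x_0$. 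Everything else is the routine multilinear-algebra identity for wedging Lie-algebra-valued forms, which I would state without belaboring.
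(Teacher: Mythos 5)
Your construction coincides with the paper's own proof: the trivial bundle over $\RR^{2q}$ with a global section along which $\alpha_{|s}=\sum_{i} x_i\,\d y_i\otimes v_i$, curvature computed from $\Theta=\d\alpha+\tfrac12\,\alpha\wedge\alpha$, and evaluation of $\Theta\wedge\overset{q}{\cdots}\wedge\Theta$ on the coordinate vectors, giving $q!\,v_1\cdots v_q$. The only cosmetic differences are that you evaluate at the origin to kill the bracket term (the paper computes $\Theta_{|s}$ along the whole section and observes the bracket terms cannot contribute to the top-degree product) and that you explicitly absorb the factor $q!$ by rescaling, which the paper leaves implicit.
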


\proof Let $P =\mathbb{R}^{2q}\times G \to \mathbb{R}^{2q}$ be the trivial bundle and let $(x_1, \ldots ,x_q,y_1,\ldots , y_q)$ be linear coordinates on $\mathbb{R}^{2q}$. Let $s\colon\mathbb{R}^{2q}\to P$ be the unit section: $s(x)=(x,1)$.

Let us consider the unique connection $\alpha$ on $P$ such that
$$\alpha_{|s}\,=\, \sum_{i=1}^qx_i\d y_i\otimes v_i\ .$$

The restriction to $s$ of the curvature form $\Theta=\d\alpha+\frac{1}{2}\alpha\wedge\alpha$ is
$$\Theta_{|s}\,=\, \sum_i \d x_i\wedge\d y_i\otimes v_i+\frac{1}{2}\sum_{i,j}x_ix_j\d y_i\wedge\d y_j\otimes[v_i,v_j]\ ,$$
hence
$$\left( \Theta \, \wedge \stackrel{q}{\cdots} \wedge \, \Omega \right)_{|s}\,=\, q!\,(\d x_1\wedge\d y_1\wedge\cdots\wedge\d x_q\wedge\d y_q)\otimes(v_1\cdots v_q)\ .$$

Therefore, at any point $p$ of the unit section $s$, we have
$$\left( \Theta \, \wedge \stackrel{q}{\cdots} \wedge \, \Theta \right)_p (\partial_{x_1},\partial_{y_1},\dots,\partial_{x_q},\partial_{y_q})\,=\, q!\, v_1\cdots v_q\ .$$

\hfill$\square$\medskip

 The case $q=1$, $V=\g$ in Theorem \ref{TeoremaGeneral} is essentially a characterisation of the curvature form:
 
\begin{corollary}\label{caso q=1}
Associated to any pair $(P \to X , \alpha)$ of a principal $G$-bundle and a principal connection $\alpha$ on it, let $\theta (P , \alpha)$ be a $\g$-valued $2$-form on $P$ 
satisfying properties {\it 1-4} above.

Then, there exists a $G$-equivariant endomorphism $T \colon \g \to \g$ such that\begin{equation}
\theta (P , \alpha) \, = \, T \circ  \Theta (P, \alpha) \ ,
\end{equation}
for all pairs $\,(P \to X  , \, \alpha)$.
\end{corollary}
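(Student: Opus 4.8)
The plan is to derive this corollary directly from Theorem \ref{TeoremaGeneral} by specialising the parameters $q=1$ and $V = \g$. First I would observe that a $\g$-valued $2$-form is precisely a $2q$-form with $q=1$ taking values in the linear representation $V = \g$ (the adjoint representation), so the hypotheses of the corollary are exactly the hypotheses of Theorem \ref{TeoremaGeneral} in this special case. Applying that theorem, there exists a $G$-equivariant linear map $T \colon S^1\g \to \g$ such that $\theta(P,\alpha) = T \circ \Theta(P,\alpha)$ for all pairs $(P \to X, \alpha)$, where the empty ``$\wedge$'' product of a single copy of $\Theta$ is just $\Theta$ itself.

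The only remaining point is the identification $S^1\g = \g$: the first symmetric tensor power of a vector space is canonically isomorphic to the space itself, and this isomorphism is $G$-equivariant since the induced action of $G$ on $S^1\g$ is just the original adjoint action on $\g$. Hence $T$ may be regarded as a $G$-equivariant endomorphism $T \colon \g \to \g$, which is exactly the assertion. I do not anticipate any genuine obstacle here; the corollary is a formal unwinding of definitions, and the substantive content has already been established in Theorem \ref{TeoremaGeneral}. The proof is therefore essentially a one-line deduction.

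\begin{proof} This is the particular case $\,q=1$, $\,V=\g\,$ of Theorem \ref{TeoremaGeneral}: a $\g$-valued $2$-form satisfying properties {\it 1-4} is a $2q$-form with $q=1$ and values on the adjoint representation $V=\g$. By that theorem, there exists a $G$-equivariant linear map $\,T\colon S^1\g\to\g\,$ with $\,\theta(P,\alpha)=T\circ\Theta(P,\alpha)\,$ for all pairs $\,(P\to X,\alpha)$, the $q$-fold exterior product reducing to a single factor $\,\Theta$. Since $\,S^1\g\,$ is canonically $G$-equivariantly isomorphic to $\,\g$, the map $\,T\,$ is a $G$-equivariant endomorphism of $\,\g$.
\qed
\end{proof}
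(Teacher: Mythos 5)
Your proof is correct and is exactly the paper's own argument: the corollary is stated there as the immediate specialisation $q=1$, $V=\g$ of Theorem \ref{TeoremaGeneral}, with the identification $S^1\g=\g$ left implicit. Nothing further is needed.
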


If the Lie group $G$ is simple, then the adjoint representation  is irreducible. This is the case of the compact groups $U(1)$, $SU(n)$, $SO(n\neq 4)$, $Sp(n)$, $Spin(n\neq 4)$, and the exceptional groups $G_2$, $F_4$, $E_6$, $E_7$ and $E_8$.
Then any $G$-equivariant endomorphism $T \colon \g \to \g$ is an homothety, and the previous corollary reads: 

\begin{corollary}\label{Unitario} Let $G$ be a simple Lie group.
Associated to any pair $(P \to X , \alpha)$ of a principal $G$-bundle and a principal connection $\alpha$ on it, let $\theta (P , \alpha)$ be a $\mathfrak{g}$-valued $2$-form on $P$ satisfying the properties {\it 1-4} above.

Then, there exists $\lambda \in \mathbb{R}$ such that, for all pairs $(P \to X  , \, \alpha)$,
$$ \theta (P , \alpha) \, = \, \lambda \cdot \Theta (P , \alpha) \ . $$ 
\end{corollary}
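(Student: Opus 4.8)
The plan is to deduce Corollary \ref{Unitario} directly from Corollary \ref{caso q=1} together with the representation-theoretic fact that a simple Lie group has irreducible adjoint representation. This is a short argument with essentially no computation.

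First I would invoke Corollary \ref{caso q=1}: since $\theta(P,\alpha)$ is a $\g$-valued $2$-form satisfying properties \textit{1--4}, there exists a $G$-equivariant endomorphism $T\colon\g\to\g$ with $\theta(P,\alpha)=T\circ\Theta(P,\alpha)$ for all pairs $(P\to X,\alpha)$. Thus it suffices to show that $T$ is a homothety, i.e. $T=\lambda\cdot\mathrm{Id}$ for some $\lambda\in\RR$.

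Next I would argue that $G$-equivariance of $T$ means exactly that $T$ is a morphism of the adjoint representation $(\g,\mathrm{Ad})$ into itself. Since $G$ is simple, the adjoint representation is irreducible (this is the classical statement recalled just before the corollary, valid because $\g$ has no nonzero proper ideals). By Schur's lemma for the real irreducible representation $\g$ --- here one uses that $\g$ is in fact absolutely irreducible for a simple Lie algebra over $\RR$ of the compact or split type under consideration, or more simply that the endomorphism algebra is $\RR$ in these cases --- any $G$-equivariant $T\colon\g\to\g$ is a scalar multiple of the identity. Hence $T=\lambda\cdot\mathrm{Id}_\g$ and therefore $\theta(P,\alpha)=\lambda\cdot\Theta(P,\alpha)$ for all $(P\to X,\alpha)$, which is the claim.

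The only delicate point is the appeal to Schur's lemma over $\RR$: a priori the commutant of a real irreducible representation could be $\CC$ or $\HH$ rather than $\RR$. For the groups listed (the compact simple groups and the exceptional groups) the adjoint representation is absolutely irreducible, so the commutant is $\RR$ and the scalar is real; this is the substantive input, but it is standard and is precisely the hypothesis under which the corollary is stated. I would simply cite it, perhaps remarking that for a general simple $G$ one still concludes $T$ lies in a division algebra over $\RR$, and for the enumerated examples that division algebra is $\RR$.
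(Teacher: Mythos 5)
Your argument is exactly the paper's: Corollary \ref{caso q=1} produces a $G$-equivariant endomorphism $T\colon\g\to\g$, and irreducibility of the adjoint representation of a simple group forces $T$ to be a homothety, hence $\theta=\lambda\cdot\Theta$. Your added caveat about Schur's lemma over $\RR$ (the commutant could a priori be $\CC$ or $\HH$, so one really needs absolute irreducibility, which holds for the compact and exceptional groups listed) is a legitimate refinement of a point the paper passes over silently, but it does not change the route of the proof.
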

  
\medskip
Let us now consider the trivial representation $V=\mathbb{R}$ in Theorem \ref{TeoremaGeneral}. Now $\theta(P,\alpha)$ is an ordinary form on the principal bundle $\pi\colon P\to X$ satisfying the properties {\it 1}-{\it 4}. From properties {\it 1} and {\it 2}, it follows that $\theta(P,\alpha)=\pi^*\widetilde\theta(P,\alpha)$ for an unique ordinary form $\widetilde\theta(P,\alpha)$ on the base manifold $X$. Therefore we may reformulate Theorem \ref{TeoremaGeneral} (in the case $V=\mathbb{R}$) using the language of ordinary forms on base manifolds.  Let us precise the statement:

Let us fix a Lie group $G$. Let $\mathcal{C}$ denote the  functor on $\Man$ that assigns, to any smooth manifold $X$, the set of isomorphy classes of pairs $(P \to X , \alpha)$, where $P \to X$ is a principal $G$-bundle and $\alpha$ is a principal connection on it.

A \textbf{ $q$-form naturally associated to a connection} is a morphism of functors $\widetilde\theta \colon \mathcal{C} \to \Omega^q$.

By definition, $\,\widetilde\theta\,$ assigns, to any pair $\,(P \to X, \alpha)\,$, a differential $q$-form $\widetilde\theta (P, \alpha)$ on $\,X\,$ satisfying the following two properties:
\begin{itemize}
\item[-] If $\,(P \to X, \alpha)\,$ and $\,(P' \to X , \alpha')\,$ are isomorphic, then $\,\widetilde\theta (P, \alpha) = \widetilde\theta(P',\alpha')$.

\item[-] For any pair $\,(P \to X , \alpha)\,$ and any smooth map $\,f \colon Y \to X$, it holds:
$$ \widetilde\theta ( f^* P , f^* \alpha) = f^* (\widetilde\theta (P, \alpha))\ . $$
\end{itemize}

The following statement is again a reformulation of a theorem due to Kol\'{a}\v{r} (\cite{Kolar}). See also (\cite{FreedHopkins}, Th. 7.20) for another formulation.

\begin{corollary}  The $2q$-forms naturally associated to a connection  biunivocally correspond with the $G$-invariant linear maps $T \colon S^q \g \to \RR$.

Any form of odd order naturally associated to a connection is identically zero.
\end{corollary}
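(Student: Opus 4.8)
The plan is to deduce this corollary directly from Theorem \ref{TeoremaGeneral} applied to the trivial representation $V=\RR$, combined with the translation between forms on $P$ and forms on $X$ already carried out in the paragraphs immediately preceding the statement. First I would recall that, by properties \textit{1} and \textit{2}, any $\RR$-valued form $\theta(P,\alpha)$ satisfying \textit{1--4} is horizontal and $G$-invariant, hence descends to a unique form $\widetilde\theta(P,\alpha)$ on the base with $\theta(P,\alpha)=\pi^*\widetilde\theta(P,\alpha)$; conversely any morphism of functors $\widetilde\theta\colon\C\to\Omega^q$ pulls back along $\pi$ to give an $\RR$-valued form on $P$ satisfying \textit{1--4} (properties \textit{3} and \textit{4} correspond exactly to the two bullet conditions defining a morphism of functors $\C\to\Omega^q$, and horizontality plus invariance hold because the form is a pullback from $X$). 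This sets up a bijection between $q$-forms naturally associated to a connection and $\RR$-valued $q$-forms on $P$ satisfying \textit{1--4}.

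Next I would invoke Theorem \ref{TeoremaGeneral} with $V=\RR$ (the trivial representation, so that $G$ acts trivially and ``$G$-equivariant linear map $S^q\g\to\RR$'' means ``$G$-invariant linear map''). In the even-order case $q'=2q$, the theorem gives a $G$-invariant linear map $T\colon S^q\g\to\RR$ with $\theta(P,\alpha)=T\circ[\Theta\wedge\overset{q}{\cdots}\wedge\Theta]$ for all pairs, and hence $\widetilde\theta(P,\alpha)$ is the descent of this form; in the odd-order case the theorem says $\theta(P,\alpha)\equiv 0$, hence $\widetilde\theta\equiv 0$, which is the second assertion. So it remains to check that the assignment $T\mapsto \widetilde\theta_T$, where $\widetilde\theta_T(P,\alpha)$ is the form on $X$ descending from $T\circ[\Theta\wedge\overset{q}{\cdots}\wedge\Theta]$, is a bijection onto the $2q$-forms naturally associated to a connection.

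Surjectivity is exactly the content of Theorem \ref{TeoremaGeneral} together with the descent observation above. For injectivity, I would argue that distinct $T$ give distinct forms: if $T\neq T'$ then $T-T'$ does not annihilate some product $v_1\cdots v_q\in S^q\g$, and by Lemma \ref{LemaDos} there is a principal bundle $P\to X$, a connection $\alpha$, a point $p$ and vectors $D_1,\dots,D_{2q}$ with $(\Theta\wedge\overset{q}{\cdots}\wedge\Theta)_p(D_1,\dots,D_{2q})=v_1\cdots v_q$; evaluating $\widetilde\theta_T-\widetilde\theta_{T'}$ on the images of these vectors under $\pi_*$ (which is legitimate since the $D_i$ can be taken along a section, as in the proof of the lemma) gives $(T-T')(v_1\cdots v_q)\neq 0$, so $\widetilde\theta_T\neq\widetilde\theta_{T'}$. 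That the correspondence is well-defined in the first place — i.e.\ that $T\circ[\Theta\wedge\cdots\wedge\Theta]$ really is horizontal, $G$-invariant, isomorphism-invariant and base-change stable — follows immediately from properties \textit{1--4} of $\Theta$ recalled at the start of the Appendix, so no separate verification is needed.

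The main obstacle, such as it is, is purely bookkeeping: making the descent bijection $\theta\leftrightarrow\widetilde\theta$ on both sides precise, and checking that ``naturally associated to a connection'' (a morphism $\C\to\Omega^q$, phrased in terms of isomorphy classes of pairs) matches exactly the properties \textit{1--4} after pulling back along $\pi$. There is no new analytic or algebraic input; the substance is entirely in Theorem \ref{TeoremaGeneral} and Lemma \ref{LemaDos}, and the corollary is a repackaging of them in the language of forms on base manifolds.
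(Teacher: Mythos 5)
Your proposal is correct and follows essentially the same route as the paper, which presents this corollary as a direct reformulation of Theorem \ref{TeoremaGeneral} for the trivial representation $V=\RR$, using the descent $\theta(P,\alpha)=\pi^*\widetilde\theta(P,\alpha)$ explained in the preceding paragraph. Your explicit injectivity check via Lemma \ref{LemaDos} (distinct $T$ yield distinct forms because the products $\Theta\wedge\overset{q}{\cdots}\wedge\Theta$ attain a spanning set of $S^q\g$) is exactly the right way to justify the word ``biunivocally,'' which the paper leaves implicit.
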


More precisely, the natural $2q$-form $\,\widetilde\theta(P,\alpha)\,$ corresponding to a $G$-invariant linear map $\,T \colon S^q \g \to \RR\,$ is the projection on $\,X\,$ of the following $2q$-form over $\,P\,$ 
$$\theta (P, \alpha) \, := \, T \circ \left[ \Theta (P, \alpha) \, \wedge \stackrel{q}{\ldots}  \wedge \, \Theta (P, \alpha) \right] \ , $$
where $\,\Theta\,$ is the $\mathfrak{g}$-valued curvature $2$-form on $\,P$. 

This result shows that the Chern-Weil forms, defined by the Weil homomorphism (\cite{Kobayashi2}), are the {\it only} natural differential forms one can construct from a $G$-connection. 

Observe that the second part of the corollary automatically implies that any $2q$-form naturally associated to a connection is closed. \medskip

\begin{example} If $G = Gl_n (\mathbb{C})$ is the general complex linear group of rank $n$, the algebra of $Gl_n$-invariant, real polynomials $T(x_{rs},y_{rs})$ on $\mathfrak{gl}_n=M_n (\CC)$ is generated by the  real and imaginary parts of the coefficients $c_1(z_{rs}), \ldots , c_n(z_{rs})$ of the characteristic polynomial of each matrix $(z_{rs}=x_{rs}+iy_{rs})$. 
The corresponding forms associated to a $G$-connection are essentially the Chern forms.
\end{example}

\end{document}